\newcommand{\eps}{\varepsilon}
\renewcommand{\le}{\leqslant}
\renewcommand{\ge}{\geqslant}
\newcommand{\aaa}{\mathcal{A}}
\newcommand{\bb}{\mathcal{B}}
\newcommand{\cc}{\mathcal{C}}
\newcommand{\dd}{\mathcal{D}}
\newcommand{\ddk}{\mathcal{D}^{(k-1)}}
\newcommand{\ee}{\mathcal{E}}
\newcommand{\ff}{\mathcal{F}}
\newcommand{\ddkf}{\ddk(\ff)}
\newcommand{\fg}{\mathcal{G}}
\newcommand{\hh}{\mathcal{H}}
\newcommand{\E}{\mathsf{E}}
\newcommand{\Prb}{\mathsf{P}}
\newtheorem{thm}{Theorem}
\newtheorem{claim}[thm]{Claim}
\newtheorem{lemma}[thm]{Lemma}
\newtheorem{prop}[thm]{Proposition}
\newtheorem{obs}[thm]{Observation}
\newtheorem{conj}[thm]{Conjecture}
\theoremstyle{plain} 
\newcommand{\thistheoremname}{}
\newtheorem*{genericthm}{\thistheoremname}
\newenvironment{namedthm}[1]
  {\renewcommand{\thistheoremname}{#1}%
   \begin{genericthm}}
  {\end{genericthm}}
\title{Best possible bounds on the number of distinct differences in intersecting families}
\author{
Peter Frankl\footnote{R\'enyi Institute, Budapest, Hungary and Moscow Institute of Physics and Technology, Russia, Email: {\tt peter.frankl@gmail.com}},
Sergei Kiselev\footnote{Moscow Institute of Physics and Technology, Email: {\tt kiselev.sg@gmail.com}}, 
Andrey Kupavskii\footnote{G-SCOP, CNRS, University Grenoble-Alpes, France and Moscow Institute of Physics and Technology, Russia; Email: {\tt kupavskii@yandex.ru}. Research of the author is supported by the grant
RSF N 21-71-10092.}}
\begin{document}

\maketitle

\begin{abstract}
    For a family $\mathcal F$, let $\mathcal D(\mathcal F)$ stand for the family of all sets that can be expressed as $F\setminus G$, where $F,G\in \mathcal F$. A family $\mathcal F$ is intersecting if any two sets from the family have non-empty intersection. In this paper, we study the following question: what is the maximum of $|\mathcal D(\mathcal F)|$ for  an intersecting  family of $k$-element sets? Frankl conjectured that the maximum is attained when $\mathcal F$ is the family of all sets containing a fixed element. We show that this holds if  $n \ge 50k\ln k$ and $k \ge 50$. At the same time, we provide a counterexample for $n< 4k$.
\end{abstract}
\section{Introduction}

Let $n \ge k \ge 1$ be integers and let $[n] = \{1, \ldots, n\}$ be the standard $n$-element set. Let further $\binom{[n]}{k}$ denote the collection of all $k$-element subsets ($k$-sets) of $[n]$. A subset $\ff$ of $\binom{[n]}{k}$ is called an \emph{intersecting family} if $F\cap F' \ne\varnothing$ for all $F,F'\in\ff$. 
Since for $n < 2k$, $F\cap F'\ne\varnothing$ is always true, from now on we always assume that $n \ge 2k$. Let us recall the Erd\H os-Ko-Rado Theorem, one of the fundamental results in extremal set theory.
\begin{thm}[\cite{EKR}]
Suppose that $\ff\subset\binom{[n]}{k}$ is intersecting and $n \ge 2k$. Then
\begin{equation}
\label{EKR_bound}
    |\ff| \le \binom{n-1}{k-1}.
\end{equation}
\end{thm}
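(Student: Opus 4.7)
The plan is to prove \eqref{EKR_bound} via Katona's cyclic permutation argument combined with double counting, which avoids any shifting or inductive machinery. I would count pairs $(\pi,F)$ with $F\in\ff$ and with $F$ occupying $k$ consecutive positions in a cyclic ordering $\pi$ of $[n]$. The bound then follows from a direct count of such pairs from the $F$-side and a structural lemma bounding, for a fixed $\pi$, the number of pairwise intersecting ``arcs'' that can lie in $\ff$.

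\textbf{Cyclic lemma.} First I would show that, for $n\ge 2k$, among the $n$ arcs of any fixed cyclic ordering $\pi$, an intersecting subfamily has size at most $k$. The argument: fix one arc $A$ in the subfamily; any other arc meeting $A$ must start at one of $2(k-1)$ positions lying within distance $k-1$ of the start of $A$. These $2(k-1)$ positions pair up so that within each pair the two arcs are offset by exactly $k$ places on the cycle. The hypothesis $n\ge 2k$ guarantees that each such pair consists of \emph{disjoint} arcs, so at most one arc from each pair can belong to the subfamily. This gives at most $k-1$ further arcs beyond $A$.

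\textbf{Double count.} Next I would count pairs $(\pi,F)$ where $\pi$ runs over the $n!$ linear orderings of $[n]$ (read cyclically) and $F\in\ff$ is an arc of $\pi$. For each fixed $F\in\ff$, the number of such $\pi$ equals $n\cdot k!\cdot(n-k)!$: pick the starting position of the arc ($n$ ways), arrange $F$ inside ($k!$ ways), and $[n]\setminus F$ outside ($(n-k)!$ ways). By the cyclic lemma, each $\pi$ contributes at most $k$ sets from $\ff$. Equating the two counts gives
\begin{equation*}
|\ff|\cdot n\cdot k!\cdot (n-k)! \;\le\; k\cdot n!,
\end{equation*}
which rearranges to $|\ff|\le \tfrac{k}{n}\binom{n}{k}=\binom{n-1}{k-1}$.

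\textbf{Main obstacle.} The only genuinely non-routine step is the cyclic lemma, and in particular the clean use of $n\ge 2k$ to force disjointness of the paired arcs. For $n<2k$, two arcs offset by $k$ still overlap on the cycle, so the pairing argument breaks; consistently with this, $\binom{[n]}{k}$ itself is intersecting in that range and the bound \eqref{EKR_bound} fails. Everything after the lemma is a routine computation.
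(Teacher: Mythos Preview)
Your argument is correct: this is Katona's cyclic permutation proof, and both the cyclic lemma and the double count are carried out properly, with the role of $n\ge 2k$ identified accurately.

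However, there is nothing to compare against: the paper does not give its own proof of this statement. The Erd\H os--Ko--Rado theorem is quoted as background with a citation to \cite{EKR}, and the paper's original work begins only later with Lemmas~\ref{lemma: small div} and~\ref{lemma: from small div}. So your proposal supplies a (valid) proof where the paper simply invokes the literature.
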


For a fixed element $x\in[n]$ one defines the \emph{full star} $\mathcal S_x := \left\{F\in \binom{[n]}{k}\colon x\in F\right\}$. Subfamilies of $\mathcal S_x$ are called \emph{stars}.

Full stars provide equality in \eqref{EKR_bound}. On the other hand Hilton and Milner \cite{HM} proved that for $n > 2k$ no other intersecting family attains equality in \eqref{EKR_bound}.

\begin{thm}[\cite{HM}]
Suppose that $\ff\subset\binom{[n]}{k}$, $\ff$ is intersecting but $\ff$ is not a star. If $n > 2k$ then
\begin{equation}
\label{HM_bound}
|\ff| \le \binom{n-1}{k-1} - \binom{n-k-1}{k-1} + 1.
\end{equation}
\end{thm}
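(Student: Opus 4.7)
The plan is to prove the Hilton--Milner theorem via the classical shifting (left-compression) approach together with a witness-and-count argument.

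First I would apply the shifting operator $S_{ij}$ repeatedly: for each $i<j$ and each $F\in\ff$ with $j\in F$, $i\notin F$, replace $F$ by $(F\setminus\{j\})\cup\{i\}$ whenever the result is not already in $\ff$. This preserves $|\ff|$, $k$-uniformity, and the intersecting property. A short case analysis (splitting on whether the alleged common point of the shifted family lies in $\{i,j\}$ or not) also shows that if the shifted family is a star then so was the original, so it suffices to prove the bound under the extra hypothesis that $\ff$ is left-compressed. Because $\ff$ is not a star, some $F\in\ff$ avoids $1$; picking such $F$ with lexicographically smallest entries and using shiftedness forces $\{2,3,\ldots,k+1\}\in\ff$.

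Next I would split $\ff=\ff_1\cup\ff_2$ according to whether $1\in F$. Every member of $\ff_1$ must meet $\{2,\ldots,k+1\}$, so stripping off $1$ embeds $\ff_1$ into the $(k-1)$-subsets of $[2,n]$ meeting $\{2,\ldots,k+1\}$, giving
\[
|\ff_1|\le\binom{n-1}{k-1}-\binom{n-k-1}{k-1}.
\]
To finish the argument it remains to bound $|\ff_2|$ against the slack in $|\ff_1|$, i.e., to show $|\ff_1|+|\ff_2|\le \binom{n-1}{k-1}-\binom{n-k-1}{k-1}+1$.

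The hard part will be this last step. My approach is to argue that any extra $G\in\ff_2\setminus\{\{2,\ldots,k+1\}\}$ is contained in $[2,n]$ and must contain some $j>k+1$ (otherwise $G=\{2,\ldots,k+1\}$), hence imposes a new intersection constraint on $\ff_1$ disjoint from the one coming from $\{2,\ldots,k+1\}$. Counting the freshly forbidden $(k-1)$-shadows in $[2,n]$ and using $n>2k$ shows that each such $G$ kills strictly more candidates in $\ff_1$ than the single set it contributes to $\ff_2$; at the optimum $|\ff_2|=1$. An alternative would be induction on $k$: contract the element $1$ inside $\ff_1$ and reduce to a Hilton--Milner-type problem with parameter $k-1$ on $[2,n]$, with the base case $k=2$ done by hand. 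Either way, the delicate point is quantifying the exchange between $\ff_1$ and $\ff_2$ sharply enough to match the extremal family $\{F\in\binom{[n]}{k}:1\in F,\ F\cap\{2,\ldots,k+1\}\ne\varnothing\}\cup\{\{2,\ldots,k+1\}\}$.
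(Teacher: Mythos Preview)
The paper does not prove this theorem; it is quoted from Hilton and Milner as a background result, so there is no proof in the paper to compare against. I therefore evaluate your sketch on its own merits.

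Your reduction to a shifted family contains a genuine error. The assertion ``if $S_{ij}(\ff)$ is a star then so was $\ff$'' is false precisely in the case where the common point of $S_{ij}(\ff)$ is $i$. Take $n=7$, $k=3$ and $\ff=\{\{1,2,3\},\{1,2,4\},\{3,4,5\}\}$: this family is intersecting and not a star, yet $S_{13}(\ff)=\{\{1,2,3\},\{1,2,4\},\{1,4,5\}\}$ is the star through $1$. Your case split succeeds only when the centre lies outside $\{i,j\}$; when the centre is $i$ one can merely conclude that every member of $\ff$ meets $\{i,j\}$, which does not by itself give the Hilton--Milner bound. The standard repair is either to stop shifting the moment a shift is about to create a star and analyse that ``two-covered'' family separately, or to shift first only among indices $\ge 2$ and treat element $1$ last; either way an additional argument is required that you have not supplied.

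The second half is also incomplete. Granting a shifted non-star with $\{2,\dots,k+1\}\in\ff$, your bound on $|\ff_1|$ is correct, but the exchange heuristic for $\ff_2$ does not sum: distinct sets $G\in\ff_2$ forbid \emph{overlapping} collections of candidates for $\ff_1$, so ``each extra $G$ kills strictly more than it contributes'' cannot simply be added over all $G$. What is actually needed is a sharp bound on $|\ff_1'|+|\ff_2|$ for the cross-intersecting pair $\ff_1':=\{F\setminus\{1\}:F\in\ff_1\}\subset\binom{[2,n]}{k-1}$ and $\ff_2\subset\binom{[2,n]}{k}$, together with the side information that $\ff_2\ne\varnothing$. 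Such inequalities exist, and your induction alternative can also be made to work, but neither route is carried out here.
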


For a family $\ff$ define the family of (setwise) differences: $\dd(\ff) := \{F\setminus F'\colon F,F'\in\ff\}$. For $0 \le \ell\le k$ define also $\dd^{(\ell)}(\ff) := \{ D\in\dd(\ff)\colon |D| = \ell\}$. Note that $\ff\subset\binom{[n]}{k}$ is intersecting if and only if $\dd^{(k)}(\ff) = \varnothing$. We also note that one of the first correlation inequalities is the Marica--Sch\"onheim Theorem \cite{MS}, which states that $|\dd(\ff)|\ge |\ff|$ for any family $\ff\subset 2^{[n]}.$ 
\begin{obs}
Suppose that $\ff\subset\binom{[n]}{k}$ is intersecting. Then
$$
|\dd(\ff)| = \sum_{0\le\ell<k} |\dd^{(\ell)}(\ff)| \le
\sum_{0\le\ell<k} \binom{n}{\ell}.
$$
On the other hand,
$$
|\dd(\mathcal S_x)| = \sum_{0\le\ell < k} \binom{n-1}{\ell}.
$$
\end{obs}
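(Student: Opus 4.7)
The observation splits into an upper bound for arbitrary intersecting families and an exact computation for the star, and I would handle them separately.

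For the inequality, the plan is to first note that if $F,F'\in\ff$ and $|F\setminus F'|=k$, then $F\setminus F'=F$ (because $|F|=k$), which forces $F\cap F'=\varnothing$, contradicting that $\ff$ is intersecting. Hence $\dd^{(k)}(\ff)=\varnothing$, exactly as flagged in the preceding paragraph. It follows that $\dd(\ff)=\bigsqcup_{0\le\ell<k}\dd^{(\ell)}(\ff)$, and since $\dd^{(\ell)}(\ff)\subset\binom{[n]}{\ell}$, we get $|\dd^{(\ell)}(\ff)|\le\binom{n}{\ell}$. Summing these bounds yields the stated inequality.

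For the second statement, the key structural remark is that every set in $\dd(\aaa_x)$ avoids $x$: indeed, if $F,F'\in\aaa_x$ then $x\in F\cap F'$, so $x\notin F\setminus F'$. Therefore $\dd^{(\ell)}(\aaa_x)\subset\binom{[n]\setminus\{x\}}{\ell}$, giving the upper bound $|\dd(\aaa_x)|\le\sum_{0\le\ell<k}\binom{n-1}{\ell}$. To show this is tight, I would take an arbitrary $D\subset[n]\setminus\{x\}$ with $|D|=\ell<k$ and exhibit $F,F'\in\aaa_x$ with $F\setminus F'=D$. The construction is to pick $F=\{x\}\cup D\cup S$ where $S$ is any $(k-1-\ell)$-subset of $[n]\setminus(D\cup\{x\})$ (possible since $n-1-\ell\ge k-1-\ell$), and $F'=(F\setminus D)\cup T$ where $T$ is any $\ell$-subset of $[n]\setminus F$ (possible since $n-k\ge k>\ell$ thanks to the standing assumption $n\ge 2k$). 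One checks $x\in F\cap F'$, $|F|=|F'|=k$, and $F\setminus F'=D$, so every such $D$ is realized.

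There is no real obstacle here; both parts are short cardinality arguments, the only subtle point being to verify the parameter inequalities ($n-1-\ell\ge k-1-\ell$ and $n-k\ge\ell$) that make the explicit construction legal under the hypothesis $n\ge 2k$.
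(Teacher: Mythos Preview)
Your proposal is correct and matches the paper's intent: the observation is stated in the paper without proof, relying only on the remark immediately preceding it that $\ff$ is intersecting iff $\dd^{(k)}(\ff)=\varnothing$. Your argument supplies exactly the details this remark leaves implicit, and the explicit construction for the star case is the natural one.
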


In \cite{F_diff}, the following conjecture is stated. 
\begin{conj}[\cite{F_diff}]
\label{F_diff_conj}
Suppose that $n > 2k > 0$, $\ff\subset\binom{[n]}{k}$ is intersecting. Then
\begin{equation}
\label{main_thm_sum}
|\dd(\ff)|
\le \sum_{0\le\ell<k} \binom{n-1}{\ell}.
\end{equation}
\end{conj}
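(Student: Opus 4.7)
The plan is to establish the bound via a structural dichotomy based on how close $\ff$ is to being a star. First, I observe that every difference $D \in \dd(\ff)$ satisfies $|D| \le k-1$: since $F, G \in \ff$ forces $F \cap G \ne \varnothing$, we have $|F \setminus G| \le k - |F \cap G| \le k-1$. Hence $\dd(\ff) = \bigsqcup_{\ell=0}^{k-1} \dd^{(\ell)}(\ff)$. When $\ff$ is a star, say $\ff \subset \aaa_x$ for some $x \in [n]$, then $x \in G$ for every $G \in \ff$, so $x \notin F \setminus G$, and $\dd(\ff)$ is contained in the collection of subsets of $[n]\setminus\{x\}$ of size at most $k-1$, whose cardinality is exactly $\sum_{0 \le \ell < k} \binom{n-1}{\ell}$, matching the desired bound.

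The heart of the argument treats the case when $\ff$ is not a star. Let $x \in [n]$ maximize $|\ff_x|$, where $\ff_x := \{F \in \ff : x \in F\}$, and set $\bar\ff_x := \ff \setminus \ff_x$, $s := |\bar\ff_x| \ge 1$. I would partition $\dd(\ff)$ into differences containing $x$ and differences avoiding it. The former can only be produced by pairs $(F,G) \in \ff_x \times \bar\ff_x$ (since $x \in F \setminus G$ forces $x \in F$ and $x \notin G$), while the latter lie among the subsets of $[n]\setminus\{x\}$ of size at most $k-1$, a universe of size $\sum_{\ell<k}\binom{n-1}{\ell}$. The strategy is to show that the presence of a nonempty $\bar\ff_x$ both adds at most some controlled number of differences containing $x$ and simultaneously rules out at least as many subsets of $[n]\setminus\{x\}$ from being realized as differences, so that the total stays within the bound.

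Quantitatively, for each $G \in \bar\ff_x$, any difference $D \ni x$ realized via $G$ must satisfy $D \subset [n] \setminus G$, yielding at most $\sum_{\ell=1}^{k-1}\binom{n-k-1}{\ell-1}$ candidates per $G$. To control $s$, I would invoke a Hilton--Milner style stability estimate: for a non-star intersecting family, $s$ is much smaller than $\binom{n-1}{k-1}$ when $n \gg k$. Combining this with a careful exchange accounting---in which the intersecting condition against $G \in \bar\ff_x$ forbids certain $(k-1)$-subsets of $[n]\setminus\{x\}$ from appearing in $\dd(\ff_x)$, because the required completing set would have to avoid $D$ and still meet $G$---should close the inequality.

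The main obstacle I anticipate is executing this exchange cleanly without wasteful constants; a naive application incurs overhead that seems to force $n \gtrsim k \log k$ rather than the $n > 2k$ stated in the conjecture (and indeed the conjecture outright fails for $n < 4k$, so the honest statement one can hope to prove is the large-$n$ version). A shifting or compression approach is tempting, but the operator $\ff \mapsto \dd(\ff)$ is not visibly monotone under standard left-compressions, so I would likely need an intermediate replacement step---for instance, induction on $|\bar\ff_x|$ together with a removal lemma bounding how many new differences a single removed set contributes to $\dd(\ff)$, or a direct comparison of $\dd(\ff)$ with $\dd(\aaa_x)$ after canonically replacing each $G \in \bar\ff_x$ by a set through $x$.
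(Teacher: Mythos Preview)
Your proposal correctly identifies that the conjecture is false as stated (and the paper indeed constructs counterexamples for $n<4k$), so the real target is the large-$n$ regime of Theorem~\ref{main_thm}. Your decomposition of $\dd(\ff)$ by membership of the high-degree element $x$, together with the exchange idea that each $G\in\bar\ff_x$ both contributes differences through $x$ and forbids certain $(k-1)$-subsets of $[n]\setminus\{x\}$ from appearing in $\dd(\ff_x)$, matches the skeleton of the paper's Lemma~\ref{lemma: from small div}. There the exchange is made precise via Kruskal--Katona applied to $\fg=\{[2,n]\setminus F: F\in\ff(\bar 1)\}$: the shadow $\partial^{k-1}\fg$ simultaneously upper-bounds the number of differences containing $x$ (inequality~\eqref{ineq_3.10}) and lower-bounds the deficit in $|\dd^{(k-1)}(\bar 1)|$ (inequality~\eqref{ineq_3.6}), and these balance provided the diversity is already known to be at most $\binom{n-\ln k}{n-k-1}$.

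The genuine gap is precisely that prior bound on $s=|\bar\ff_x|$. Invoking a ``Hilton--Milner style stability estimate'' does not supply it: Hilton--Milner bounds $|\ff|$, not the diversity, and the paper explicitly remarks that even the Dinur--Friedgut junta approximation yields usable structure only for $n>Ck^{1+\eps}$, well above $k\log k$. The paper's decisive new ingredient is a concentration inequality (Theorem~\ref{complement_conc}, used in Lemma~\ref{lemma: small div}): from failure of \eqref{main_thm_sum} one first extracts $|\ff|\ge\binom{n-1}{k-1}/(2k)$, hence $\ddkf$ has density at least $1-2/C$ in $\binom{[n]}{k-1}$; concentration then produces an $F\in\ff$ with $|\ddkf\cap\binom{\overline F}{k-1}|\ge\tfrac1{10}\binom{n-k}{k-1}$, and a cross-intersecting Kruskal--Katona argument (Claims~\ref{big_or_smol}--\ref{smol_diversity}) localises almost all of $\ff$ to a single element of $F$, giving $\gamma(\ff)\le\binom{n-\ln k}{n-k-1}$. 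None of your suggested alternatives---removal induction on $s$, compression, or direct replacement of sets in $\bar\ff_x$---furnishes this step, and without it the exchange accounting you outline cannot be closed in the regime $n\sim k\log k$.
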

In \cite{F_diff} the conjecture was proved for $n \ge k(k+3)$. The main result of this paper is the following theorem.
\begin{thm}
\label{main_thm}
Conjecture~\ref{F_diff_conj} is true for $n \ge 50k\ln k$, $k \ge 50$. Moreover, equality in \eqref{main_thm_sum} is attained only for full stars.
\end{thm}
We also provide a counterexample for the conjecture when $n$ is close to $2k$.
\begin{thm}
\label{theorem: counterexample}
For integers $n, k$, where $n = ck$, $2<c<4$ and $k > k(c)$ sufficiently large, there is an intersecting family $\ff$ with $|\dd(\ff)|$ greater than the right hand side of \eqref{main_thm_sum}.
\end{thm}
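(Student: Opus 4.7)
The plan is to take the Hilton--Milner family as the counterexample and compare its difference set directly with that of the full star $\aaa_1$. Set
\[
\ff := \{F \in \binom{[n]}{k} : 1 \in F \text{ and } F \cap [2, k+1] \neq \varnothing\} \cup \{[2, k+1]\}.
\]
That $\ff$ is intersecting is immediate: every two sets in the first part share the element $1$, and $[2, k+1]$ meets every other member by construction. Since $\dd(\aaa_1)$ consists of all subsets of $[2, n]$ of size at most $k-1$, we have $|\dd(\aaa_1)| = \sum_{\ell=0}^{k-1}\binom{n-1}{\ell}$, which is exactly the right-hand side of \eqref{main_thm_sum}.

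The heart of the argument is a careful inventory of the symmetric difference of $\dd(\ff)$ and $\dd(\aaa_1)$. I will argue that the \emph{lost} differences $\dd(\aaa_1) \setminus \dd(\ff)$ are exactly the $(k-1)$-subsets of $[k+2, n]$: for any other $D \in \dd(\aaa_1)$ one can realize $D = F \setminus F'$ with $F = \{1\} \cup D \cup C$ and $F' = \{1\} \cup C \cup C'$, choosing the padding so that both sets meet $[2, k+1]$ and therefore lie in $\ff$; the only obstruction is $D \subset [k+2, n]$ with $|D| = k-1$, where $F = \{1\} \cup D$ is forced and misses $[2, k+1]$. This gives loss $= \binom{n-k-1}{k-1}$. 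Dually, the \emph{gained} differences $\dd(\ff) \setminus \dd(\aaa_1)$ must contain $1$, so they arise from pairs of the form $(F, [2, k+1])$ with $1 \in F \in \ff$; they are exactly the sets $\{1\} \cup S$ with $S \subset [k+2, n]$ and $|S| \le k-2$, giving gain $= \sum_{\ell=0}^{k-2}\binom{n-k-1}{\ell}$.

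It remains to show the gain strictly exceeds the loss when $n = ck$, $2 < c < 4$, and $k \ge k(c)$. Setting $N := n - k - 1 = (c-1)k - 1$, the successive binomial ratio $\binom{N}{k-2}/\binom{N}{k-1} = (k-1)/(N - k + 2)$ tends to $1/(c-2)$. For $c \in (2, 3]$ this limit is at least $1$, and even the single term $\binom{N}{k-2}$ eventually exceeds $\binom{N}{k-1}$, so the sum dominates. For $c \in (3, 4)$ the limit lies in $(1/2, 1)$ and a geometric-tail estimate yields
\[
\frac{\sum_{\ell=0}^{k-2}\binom{N}{\ell}}{\binom{N}{k-1}} \longrightarrow \frac{1}{c-3},
\]
which exceeds $1$ precisely when $c < 4$.

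The main obstacle is making the geometric-tail estimate uniform for $c$ close to $4$: the asymptotic margin $(4-c)/(c-3)$ shrinks to $0$ there, so the lower-order corrections in approximating the partial binomial sum by its geometric tail must be carefully controlled to pin down the threshold $k(c)$.
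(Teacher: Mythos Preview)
Your proposal is correct and follows the paper's approach exactly: your Hilton--Milner family is precisely the paper's $\aaa_k(n,k)$, and your loss $\binom{n-k-1}{k-1}$ versus gain $\sum_{\ell\le k-2}\binom{n-k-1}{\ell}$ computation matches the paper's \eqref{eq: countrex_1} and \eqref{eq: countrex_2}. One small correction and one simplification: at $c=3$ the single term $\binom{N}{k-2}=\tfrac{k-1}{k+1}\binom{N}{k-1}$ does \emph{not} by itself exceed $\binom{N}{k-1}$, so you need more than one term there; and the paper handles the final inequality uniformly over $(2,4)$ by setting $d=c/2\in(1,2)$, fixing the least $t$ with $\sum_{j=1}^t d^{-j}\ge 1$, and checking $\binom{N}{k-1-j}>d^{-j}\binom{N}{k-1}$ for $1\le j\le t$ once $k$ is large enough --- this finite, non-asymptotic argument also dissolves your stated ``main obstacle'' near $c=4$.
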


Let us outline the approach  to prove Theorem~\ref{main_thm}.
Since every $D\in\ddkf$ is a subset of some $F\in\ff$, we have
$$
|\ddkf| \le k\cdot |\ff|.
$$

One can use this easy bound to deduce the conjecture for $\ff$ with $|\ff|$ small. 
\begin{prop}\label{obs7}
Let $\ff\subset\binom{[n]}{k}$ be intersecting. Suppose that $|\ff| \le \binom{n-1}{k-1} / (2k)$, $n \ge 3k$. Then
$$
|\dd(\ff)| \le
\sum_{0\le\ell<k} \binom{n-1}{\ell}.
$$
\end{prop}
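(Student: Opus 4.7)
\medskip
\noindent\textbf{Proof plan for Proposition~\ref{obs7}.}

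The plan is to split $\dd(\ff)$ by cardinality, use the hint bound $|\ddkf|\le k|\ff|$ at the top level $\ell=k-1$, and use the trivial bound $|\dd^{(\ell)}(\ff)|\le\binom{n}{\ell}$ at all lower levels. Concretely, at level $k-1$ the hypothesis $|\ff|\le\binom{n-1}{k-1}/(2k)$ gives $|\ddk(\ff)|\le\binom{n-1}{k-1}/2$, while for $0\le\ell\le k-2$ we just use $|\dd^{(\ell)}(\ff)|\le\binom{n}{\ell}$. Summing these contributions yields
$$
|\dd(\ff)|\le\tfrac{1}{2}\binom{n-1}{k-1}+\sum_{\ell=0}^{k-2}\binom{n}{\ell}.
$$

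Next, I would apply Pascal's identity $\binom{n}{\ell}=\binom{n-1}{\ell}+\binom{n-1}{\ell-1}$ to rewrite
$$
\sum_{\ell=0}^{k-2}\binom{n}{\ell}=\sum_{\ell=0}^{k-2}\binom{n-1}{\ell}+\sum_{\ell=0}^{k-3}\binom{n-1}{\ell}.
$$
Combining with the previous display, the claim $|\dd(\ff)|\le\sum_{\ell=0}^{k-1}\binom{n-1}{\ell}$ reduces to verifying
$$
\sum_{\ell=0}^{k-3}\binom{n-1}{\ell}\le\tfrac{1}{2}\binom{n-1}{k-1}.
$$

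Finally, I would verify this last inequality using the geometric decay of binomial coefficients in the tail. For $n\ge 3k$ and any $\ell\le k-3$, the ratio $\binom{n-1}{\ell-1}/\binom{n-1}{\ell}=\ell/(n-\ell)$ is at most $(k-3)/(2k+3)<1/2$, so summing the geometric series gives
$$
\sum_{\ell=0}^{k-3}\binom{n-1}{\ell}<2\binom{n-1}{k-3}.
$$
The same ratio bound applied twice yields $\binom{n-1}{k-3}\le\tfrac{1}{4}\binom{n-1}{k-1}$, since $\binom{n-1}{k-2}/\binom{n-1}{k-3}=(n-k+2)/(k-2)>2$ and $\binom{n-1}{k-1}/\binom{n-1}{k-2}=(n-k+1)/(k-1)>2$ under $n\ge 3k$. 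Combined, $\sum_{\ell=0}^{k-3}\binom{n-1}{\ell}\le\tfrac{1}{2}\binom{n-1}{k-1}$, completing the proof.

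The argument is essentially routine once the split is set up correctly; the only place where the hypothesis $n\ge 3k$ is genuinely needed is in ensuring the common ratio is $<1/2$, so the geometric tail collapses. No obstacle of substance arises: the $k|\ff|$ bound eats the entire top-level term, and the remaining binomial coefficients are much smaller than $\binom{n-1}{k-1}$ whenever $n\ge 3k$.
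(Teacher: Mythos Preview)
Your proof is correct and follows essentially the same route as the paper's. The paper argues the contrapositive---assuming \eqref{main_thm_sum} fails, it uses the trivial bound $|\dd^{(\ell)}(\ff)|\le\binom{n}{\ell}$ for $\ell\le k-2$, Pascal's identity, and the tail estimate $\sum_{\ell=0}^{k-3}\binom{n-1}{\ell}\le 2\binom{n-1}{k-3}\le\frac{1}{2}\binom{n-1}{k-1}$ (for $C=n/k\ge 3$) to force $|\ddkf|>\frac{1}{2}\binom{n-1}{k-1}$ and hence $|\ff|>\binom{n-1}{k-1}/(2k)$---which is exactly your computation run in reverse.
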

We will prove Proposition~\ref{obs7} in the beginning of Section~\ref{section: proof of small div}.

The difficult case is how to deal with relatively large intersecting families. There are various structural theorems concerning relatively large intersecting families, e.g. \cite{DF, Frankl1987EKR, KZ}. However, the smaller is $n$ w.r.t. $k$, the less we can infer. For $n < k^2$ the bound in \eqref{HM_bound} is larger than $\binom{n-1}{k-1}/3$. For $n = o(k^2)$ there are various intersecting families that have, say, very large covering number, but still contain $(1 - o(1))\binom{n-1}{k-1}$ members. Even the junta approximation of Dinur and Friedgut gives results that are strong enough to combine with Proposition~\ref{obs7}, say, only for $n>Ck^{1+\eps},$ where $\eps>0$ is some fixed constant and $C = C(\eps)$ is a very large constant depending on $\eps.$  The approach that we found is to apply a powerful concentration inequality of the third author (cf. \cite{FK_conc}) to prove that for $n > ck\ln k$ the lower bound $|\ff| > \binom{n-1}{k-1}/(2k)$ is sufficient to guarantee that the overwhelming majority of the members of $\ff$ contain a fixed vertex. To state this result let us recall the definition of $\gamma(\ff)$, the diversity of the family $\ff$.
$$
\gamma(\ff) := \min_x |\{F\in\ff\colon x\notin F\}|.
$$
Clearly, $\gamma(\ff) = 0$ iff $\ff$ is a star. Then Theorem~\ref{main_thm} is implied by the the following two lemmas.
\begin{lemma}
\label{lemma: small div}
Let $\ff\subset\binom{[n]}{k}$ be intersecting. Suppose that $n \ge 50k\ln k$, $k \ge 50$ and \eqref{main_thm_sum} does not hold. Then
$$
\gamma(\ff) \le 
\binom{n -\ln k}{n - k - 1}.
$$
\end{lemma}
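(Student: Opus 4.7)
I would argue by contradiction, assuming $\gamma(\ff) > \binom{n-\ln k}{n-k-1}$ while \eqref{main_thm_sum} fails, and split according to the size of $\ff$.

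First, if $|\ff| \le \binom{n-1}{k-1}/(2k)$, Proposition~\ref{obs7} gives \eqref{main_thm_sum} directly, contradicting the standing assumption. Hence I may restrict attention to the regime $|\ff| > \binom{n-1}{k-1}/(2k)$, i.e., ``large'' intersecting families. In this regime, the aim is to show that $\ff$ must be very close to a star, producing an element $x^*\in[n]$ with
$$
|\ff \setminus \aaa_{x^*}| \le \binom{n-\ln k}{n-k-1},
$$
which contradicts the hypothesis $\gamma(\ff) > \binom{n-\ln k}{n-k-1}$.

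The tool for this concentration-to-a-star step is the inequality of the third author from \cite{FK_conc}, as flagged in the introduction. The plan is to feed $\ff$ into that inequality with the lower bound $|\ff| > \binom{n-1}{k-1}/(2k)$ and extract the statement that, for the element $x^*$ maximising $|\ff\cap\aaa_{x^*}|$, the complement $\ff\setminus\aaa_{x^*}$ is as small as claimed. The assumption $n \ge 50k\ln k$ is what makes the argument tight: the logarithmic factor in $n/k$ is precisely what translates, inside the tail bound, into the $\ln k$ appearing in the target binomial $\binom{n-\ln k}{n-k-1}$. Switching between the ``$\max_x|\ff\cap\aaa_x|$'' output of the concentration inequality and the diversity $\gamma(\ff) = |\ff|-\max_x|\ff\cap\aaa_x|$ is immediate once $|\ff|$ is tracked.

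The main obstacle I anticipate is the calibration: one must select the parameters of the concentration inequality so that its exponential tail bound sharpens exactly into the binomial $\binom{n-\ln k}{n-k-1}$, not into some weaker polynomial or exponential in $n,k$. In particular, the constants $50$ and $2k$ appearing in the hypothesis and in Proposition~\ref{obs7} are only useful if they line up with the constants implicit in \cite{FK_conc}; verifying this alignment, and checking that $k \ge 50$ suffices to absorb lower-order error terms, is where the real work of the lemma lies.
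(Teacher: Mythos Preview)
Your two-case split (small $|\ff|$ handled by Proposition~\ref{obs7}, then large $|\ff|$ via concentration) matches the paper's outline, but the concentration step as you describe it does not work and misses the two substantive ideas of the proof.

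First, you propose to feed $\ff$ itself into the inequality from \cite{FK_conc}. The hypothesis $|\ff|>\binom{n-1}{k-1}/(2k)$ only gives $\ff$ density $\alpha\ge 1/(2n)$ in $\binom{[n]}{k}$, while the deviation parameter in Theorem~\ref{complement_conc} is $\eps\sim 1/\sqrt t\sim\sqrt{k/n}$; with $n=50k\ln k$ this $\eps$ dwarfs $\alpha$, so the inequality says nothing useful about $\ff$ directly. More basically, Theorem~\ref{complement_conc} controls $|\fg(\overline H)|$ for a \emph{random} $H$; it does not by itself produce a high-degree element $x^*$, so the sentence ``the $\max_x|\ff\cap\aaa_x|$ output of the concentration inequality'' has no referent.

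What the paper actually does is apply the concentration inequality to $\ddkf$, which (once \eqref{main_thm_sum} fails) has density at least $1-2/C$ in $\binom{[n]}{k-1}$, where $C=n/k$. The lower bound $|\ff|\ge\binom{n-1}{k-1}/(2k)$ is used only to guarantee that a uniformly random $k$-set lies in $\ff$ with probability $\ge 1/(2n)$, which beats the failure probability in \eqref{eqconc}; this yields a particular $F\in\ff$ with $|\ddkf\cap\binom{\overline F}{k-1}|\ge\frac1{10}\binom{n-k}{k-1}$. Every such $(k-1)$-set $D$ extends by some $x\in F$ to a member of $\ff$, giving pairwise cross-intersecting families $\ee_x\subset\binom{\overline F}{k-1}$. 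A Kruskal--Katona\,/\,cross-intersection argument (Theorem~\ref{crossint}) then forces a single $\ee_x$ to have size at least $\binom{n-k}{k-1}-\binom{n-k-\ln k}{k-1}$; a second application of Theorem~\ref{crossint} to $\ee_x$ against each $\ff(A,F)$ with $x\notin A$ bounds $|\ff(\bar x)|$ by $\binom{n-\ln k-1}{n-k-1}$. Both of these ingredients --- applying concentration to $\ddkf$ rather than to $\ff$, and the cross-intersection step that isolates a single star centre inside $F$ --- are absent from your plan, and they are precisely where the $\ln k$ in the target $\binom{n-\ln k}{n-k-1}$ originates.
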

\begin{lemma}
\label{lemma: from small div}
Let $n, k$ be positive integers satisfying $n \ge 50k\ln k$, $k \ge 50$.
Suppose that $\ff\subset \binom{[n]}{k}$ is an intersecting family with
$$
1 \le \gamma(\ff) \le \binom{n-\ln k}{n-k-1},
$$
Then
$$
|\dd(\ff)| < \sum_{0\le i< k}\binom{n-1}{i}.
$$
\end{lemma}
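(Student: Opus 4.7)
The plan is to split the difference family $\dd(\ff)$ according to whether a difference contains the element $x$ at which the minimum in $\gamma(\ff)$ is attained, and to use the smallness of $\gamma(\ff)$ on both pieces. By relabelling, assume $x = n$; let $\ff_0 := \{F \in \ff : n \notin F\}$ (so $|\ff_0| = \gamma := \gamma(\ff) \ge 1$) and $\ff_1 := \ff \setminus \ff_0$. Write $\dd(\ff) = \dd_n \sqcup \bar\dd_n$ according to whether the element $n$ lies in the difference.

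First I would bound $|\dd_n|$: every $D \in \dd_n$ equals $F \setminus F'$ for some $F \in \ff_1$ and $F' \in \ff_0$, so $D \setminus \{n\}$ is a subset of $[n-1] \setminus F'$ of size at most $k - 2$ (using that $\ff$ is intersecting). Summing over the $\gamma$ choices of $F'$ gives
\[ |\dd_n| \;\le\; \gamma \sum_{j=0}^{k-2}\binom{n-k-1}{j}, \]
and for $n \ge 10k$ the ratio of successive terms in this sum is at least $(n-2k+1)/(k-1) \ge 9$, so the sum is essentially its top term $\binom{n-k-1}{k-2}$.

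Next I would show that many $(k-1)$-subsets of $[n-1]$ are absent from $\bar\dd_n$. Fix any $F_0 \in \ff_0$ and consider a $(k-1)$-subset $D$ of $[n-1] \setminus F_0$. If $D \in \bar\dd_n$, write $D = G \setminus G'$ with $G, G' \in \ff$ and $n \notin G \setminus G'$. Since $|G| = k$ and $G \supseteq D$, one has $G = D \cup \{a\}$ for some $a \in [n] \setminus D$. The case $a = n$ is excluded, because then $G \cap F_0 = D \cap F_0 = \varnothing$ would contradict the intersecting property of $\ff$. Hence $G \in \ff_0$, and intersecting with $F_0$ forces $a \in F_0$, giving $|G \cap F_0| = 1$. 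Each $G \in \ff_0$ contributes at most one such $D$, so at most $\gamma$ of the $\binom{n-k-1}{k-1}$ candidate subsets can lie in $\bar\dd_n$; at least $\binom{n-k-1}{k-1} - \gamma$ are missing.

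Combining,
\[ |\dd(\ff)| \;\le\; \sum_{\ell=0}^{k-1}\binom{n-1}{\ell} \;-\; \binom{n-k-1}{k-1} \;+\; \gamma\biggl(1 + \sum_{j=0}^{k-2}\binom{n-k-1}{j}\biggr), \]
and the lemma reduces to showing this is strictly below $\sum_{i=0}^{k}\binom{n-1}{i}$. Note that the target carries an additive slack $\binom{n-1}{k}$, which for $n \ge 10k$ is at least $9 \binom{n-1}{k-1}$, and this dwarfs the ``extra'' quantity $\gamma(1 + \sum_{j=0}^{k-2}\binom{n-k-1}{j}) - \binom{n-k-1}{k-1}$ once one exploits the geometric decay of the binomials together with the hypothesized bound $\gamma \le \binom{n-\ln k}{n-k-1}$. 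The main obstacle is precisely this final arithmetic: the single-level saving is delicate, and if it proves insufficient one would repeat the disjointness argument at level $k-2$ (where $(k-2)$-subsets of $[n-1] \setminus F_0$ are missing unless some $D \cup \{n, s\}$ with $s \in F_0$ lies in $\ff$ or some element of $\ff_0$ extends $D$), accumulating enough savings to close the estimate.
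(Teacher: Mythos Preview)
Your decomposition $\dd(\ff)=\dd_n\sqcup\bar\dd_n$ matches the paper's, and your idea of exhibiting missing $(k-1)$-sets in $\bar\dd_n$ is right in spirit. But the quantitative execution collapses once $\gamma$ is even moderately large, and the hypothesis permits $\gamma$ to be \emph{exponential} in $k$.

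Concretely: your cost term for $\dd_n$ is $\gamma\sum_{j\le k-2}\binom{n-k-1}{j}$, which for $n\ge 10k$ is of order $\gamma\binom{n-k-1}{k-2}$. Your savings term, obtained by fixing a \emph{single} $F_0\in\ff_0$, is at most $\binom{n-k-1}{k-1}$. Balancing these forces $\gamma\lesssim (n-2k)/(k-1)\approx 8$. The slack $\binom{n-1}{k}$ you invoke (note: the $i\le k$ in the displayed conclusion is a typo---the paper's proof actually yields $\sum_{i<k}\binom{n-1}{i}$, which is what the main theorem needs) buys you only a factor of roughly $(n/(n-k))^{k}$, still far below the allowed $\gamma\le\binom{n-\ln k}{n-k-1}$. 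Repeating your missing-set argument at lower levels makes things worse, not better: the savings at level $k-2$ are smaller than at level $k-1$.

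The two fixes the paper makes are exactly the two places you lose: instead of summing over $F'\in\ff_0$ for $\dd_n$, take the \emph{union}, which is $\sum_{i\le k-2}|\partial^{i}\fg|$ where $\fg=\{[n-1]\setminus F':F'\in\ff_0\}\subset\binom{[n-1]}{n-k-1}$; and instead of fixing one $F_0$ for the savings, observe that every $(k-1)$-set in $\partial^{k-1}\fg$ is missing from $\bar\dd_n$ up to at most $k\gamma$ exceptions. The Lov\'asz form of Kruskal--Katona then gives $|\partial^{k-1}\fg|>k^{2}\gamma$ from the hypothesis on $\gamma$, and a simple double count gives $\sum_{i\le k-2}|\partial^i\fg|<\tfrac17|\partial^{k-1}\fg|$. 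Now both sides scale with $|\partial^{k-1}\fg|$ and the inequality closes for all admissible $\gamma$. The missing ingredient in your argument is precisely this use of Kruskal--Katona on the complements.
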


\medskip

The paper is organised as follows. In Section~\ref{section: notation} we introduce the necessary notation and list some standard tools that we use. 
In Section~\ref{section: main thm from small diversity} we give a purely combinatorial proof of Lemma~\ref{lemma: from small div}. 
In Section~\ref{section: concentration and small div} we state the concentration inequality and apply it to prove Lemma~\ref{lemma: small div}.
In Section~\ref{section: countrexample} we prove Theorem~\ref{theorem: counterexample}.
The last section contains some remarks and open problems.

\section{Notation and tools}
\label{section: notation}

For a family $\ff\subset 2^{[n]}$ and a pair of subsets $A\subset B\subset[n]$ we use the standard notation $$\ff(A, B) := \{F\setminus A\colon F\in\ff, F\cap B = A\}.$$

In the cases $A = B$ or $A = \varnothing$ we use the shorter form $\ff(B) := \ff(B, B)$, $\ff(\overline B) := \ff(\varnothing, B)$. If $B = \{x\}$, then we set $\ff(x) := \ff(\{x\})$ and $\ff(\bar x) := \ff(\overline{ \{x\} })$.

For $0 \le i \le k$ and $\ff\subset\binom{[n]}{k}$ we define the \emph{$i$-th level shadow}
$\partial^{i} \ff := \{S\in\binom{[n]}{i}\colon \exists F\in\ff, S\subset F\}$.




For $k$ and $\ff$ fixed Kruskal \cite{Kruskal1963number} and Katona \cite{Katona1966theorem} determined the minimum of $|\partial^i \ff|$ for all $0 \le i < k$. The following handy version of the Kruskal--Katona theorem is due to Lov\'asz \cite{Lovasz1979combinatorial}.
\begin{namedthm}{Kruskal-Katona Theorem}[Lov\'asz version]
Suppose that $\ff\subset\binom{[n]}{k}$ and $x$ is the unique real number satisfying $k \le x \le n$, $|\ff| = \binom{x}{k}$. Then
\begin{equation}
\label{KK_Lovasz}
|\partial^i \ff| \ge \binom{x}{i}, \quad 0 \le i < k.
\end{equation}
\end{namedthm}
(For a short proof of \eqref{KK_Lovasz}, c.f. \cite{Frankl1984short}.)

Two families $\ff\subset\binom{[n]}{k}$, $\fg\subset\binom{[n]}{\ell}$ are called \emph{cross-intersecting} if $F\cap G \ne\varnothing$ for all $F\in\ff$, $G\in\fg$.
For a family $\ff\subset\binom{[n]}{l}$ let $\ff^c$ denote the family of complements, $\ff^c := \{[n]\setminus F\colon F\in\ff\}$.

Katona \cite{Katona1964intersection} was the first to note that for $n \ge k + \ell$, $\ff$ and $\fg$ are cross-intersecting iff $\ff \cap \partial^k(\fg^c) = \varnothing$. Together with the Kruskal-Katona Theorem, this immediately implies 

\begin{thm}
\label{crossint}
Let $m \ge a + b$. If $\mathcal A \subset \binom{[m]}{a}$ and $\mathcal B \subset \binom{[m]}{b}$ are cross-intersecting and $|\mathcal A| \ge \binom{x}{m-a}$ for some $m - a \le x \le m$, then $|\mathcal B| \le \binom{m}{b} - \binom{x}{b}$.
\end{thm}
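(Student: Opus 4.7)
The plan is to follow exactly the recipe sketched by the authors just above the statement: reformulate cross-intersection as a shadow-avoidance condition à la Katona, then apply the Lovász form of Kruskal--Katona to get a lower bound on the relevant shadow.

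First I would spell out the Katona equivalence. The pair $\mathcal{A}, \mathcal{B}$ is cross-intersecting iff no $B \in \mathcal{B}$ is contained in the complement $[n]\setminus A$ of some $A \in \mathcal{A}$. Since each set in $\mathcal{A}^c := \{[n]\setminus A : A \in \mathcal{A}\}$ has size $n-a \ge b$, this is precisely the statement
\[
\mathcal{B} \cap \partial^{b}(\mathcal{A}^c) = \varnothing.
\]
In particular, $|\mathcal{B}| \le \binom{n}{b} - |\partial^{b}(\mathcal{A}^c)|$.

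Next I would bound $|\partial^{b}(\mathcal{A}^c)|$ from below by Kruskal--Katona. Note $|\mathcal{A}^c| = |\mathcal{A}| \ge \binom{x}{n-a}$, and the hypothesis $n-a \le x \le n$ is exactly the range in which the Lovász form of Kruskal--Katona applies to the $(n-a)$-uniform family $\mathcal{A}^c$. Since $0 \le b \le n-a$, we obtain
\[
|\partial^{b}(\mathcal{A}^c)| \ge \binom{x}{b}.
\]
Combining the two displays yields $|\mathcal{B}| \le \binom{n}{b} - \binom{x}{b}$, as desired.

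The only piece that requires any care is the boundary case $b = n-a$, where $\partial^b(\mathcal{A}^c) = \mathcal{A}^c$ and the inequality $|\partial^b(\mathcal{A}^c)| \ge \binom{x}{b}$ reduces to the trivial $|\mathcal{A}^c| \ge \binom{x}{n-a}$, which is the hypothesis itself; so no real obstacle arises. Beyond that, the argument is a direct two-line concatenation of the Katona reformulation with \eqref{KK_Lovasz}, and there is no genuine difficulty to overcome.
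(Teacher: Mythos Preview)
Your proof is correct and follows exactly the approach the paper indicates: it is the Katona reformulation $\mathcal B\cap\partial^{b}(\mathcal A^{c})=\varnothing$ combined with the Lov\'asz form of Kruskal--Katona applied to the $(n-a)$-uniform family $\mathcal A^{c}$. The paper does not give a written-out proof beyond that one-line remark, so there is nothing further to compare.
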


\section{Families with small diversity}
\label{section: main thm from small diversity}
In this section, we prove Lemma~\ref{lemma: from small div}. W.l.o.g. we assume that element 1 has  maximal degree in $\ff$, i.e., that $\gamma(\ff) = |\ff(\bar 1)|$.

For notational convenience, set $\fg := \{[2, n] \setminus F\colon F\in \ff(\bar 1)\}$ and note that $\fg\subset\binom{[2,n]}{n-k-1}$. Choose $x$ so that $|\fg| = {x\choose n-k-1}$ and note that $x\le n-\ln k$ by the assumption. Put $r:=\ln k$ for shorthand. Using  \eqref{KK_Lovasz}, we get
\begin{align}
\frac{|\fg|}{|\partial^{k-1} \fg|} 
&\le 
\frac{{x\choose n-k-1}}{{x\choose k-1}}=
\prod_{i=k}^{n-k-1}\frac{x-i+1}{i} \le 
\prod_{i=k}^{n-k-1}\frac{n-r-i+1}{i} = 
\prod_{i=k}^{n-k-1}\frac{i-r+2}{i}\notag \\ 
&=
\prod_{i=k}^{n-k-1}\Big(1-\frac{r-2}{i}\Big)\le 
{\rm exp}\Big(-(r-2)\sum_{i=k}^{n-k-1}\frac{1}{i}\Big)\le 
{\rm exp}\Big(-(r-2)\ln\frac{n-k}{k}\Big) \notag 
\\
& \label{small_div_ineq1} \le 
e^{-(\ln k - 2)\ln (50\ln k - 1)} 
< k^{-5.27} \cdot (50\ln k - 1)^2
= k^{-2} \cdot \frac{(50\ln k - 1)^2}{k^{3.27}}
<
k^{-2}.
\end{align}

Set $\dd := \dd(\ff)$. In order to give an upper bound for $|\dd|$, we use $|\dd| = |\dd(1)| + |\dd(\bar 1)|$ and distinguish $D\in\dd$ according to their size. We will need the following two inequalities.
\begin{equation}
\label{ineq_3.4}
\sum_{0\le i\le k-2} |\dd^{(i)}(\bar 1)| \le 
\sum_{0\le i\le k-2} \binom{n-1}{i},
\end{equation}
\begin{equation}
\label{ineq_3.5}
|\partial^{k-1} \ff(\bar 1)|\le 
k |\ff(\bar 1)| = k|\fg|  \overset{\eqref{small_div_ineq1}}{<} |\partial^{k-1} \fg| / k.
\end{equation}

The inequality \eqref{ineq_3.4} is true simply because $\dd^{(i)}(\bar 1)\subset {[2,n]\choose i}$.  The inequality \eqref{ineq_3.5} uses a trivial upper bound on the size of the $(k-1)$-shadow and inequality \eqref{small_div_ineq1}. Next, we prove the following inequality.
\begin{equation}
\label{ineq_3.6}
|\dd^{(k-1)}(\ff(1))| \le \binom{n-1}{k-1} - |\partial^{k-1} \fg|.
\end{equation}
\begin{proof}
Suppose that $H\in\dd^{(k-1)}(\ff(1))$. Then $H\in\binom{[2,n]}{k-1}$ and $\{1\}\cup H\in \ff$. To prove \eqref{ineq_3.6}, we show $H\notin \partial^{k-1}\fg$. Indeed, the opposite would mean $H \subset [2,n]\setminus F$ for some $F\in\ff(\bar 1)$. However this would imply $(\{1\}\cup H) \cap F = \varnothing$, a contradiction.
\end{proof}

Note that the members $D\in\dd^{(k-1)}(\bar 1)$ that are not accounted for in the left hand side of \eqref{ineq_3.6} are of the form $F\setminus F'$ with $F \in \ff(\bar 1), F'\in \ff$. Consequently, each such $D$ is a member of $\partial^{k-1}\ff(\bar 1)$. Combining this with \eqref{ineq_3.5} and \eqref{ineq_3.6}, we infer
$$
|\dd^{(k-1)}(\bar 1)| < \binom{n-1}{k-1} - \frac{k-1}{k} |\partial^{k-1}\fg|.
$$
Adding \eqref{ineq_3.4} to it:
\begin{equation}
\label{ineq_3.7}
|\dd(\bar 1)| < \sum_{0\le i< k} \binom{n-1}{i} - \frac{k-1}{k}|\partial^{k-1}\fg|.    
\end{equation}

In order to bound $\dd(1)$, first note that $H\in\dd(1)$ means that $\{1\}\cup H = F\setminus F'$ for $F,F'\in\ff$, where $1\in F$, $1\notin F'$. Since $H\cap F' = \varnothing$, $H\in \partial^{|H|}\fg$. Also, $F\cap F' \ne \varnothing$ implies $0 \le |H| \le k-2$. Consequently,
$$
|\dd(1)| \le
\sum_{0\le i\le k-2} |\partial^i \fg|.
$$

Let us prove that
\begin{equation}
\label{ineq_3.9}
|\partial^{i-1} \fg| < \frac18 |\partial^i \fg|\quad \text{ for } 1 \le i \le k-1.
\end{equation}
\begin{proof}
This follows from the local LYM inequality, but for completeness we give a direct proof.
To this end, consider the bipartite graph with parts $\partial^{i-1} \fg$ and $\partial^i \fg$, where an edge is  $(A, B)$ is drawn iff $A\subset B$. Let $e$ be the number of edges in this graph. Then $e = i|\partial^i \fg|$ is obvious. On the other hand, if $A\in \partial^{i-1} \fg$ then for some $(n-k-1)$-set $G\in\fg$ we have $A\subset G$. Consequently, there are at least $n-k-i$ edges with  $A$ as one of the endpoints. Thus, $e \ge (n-k-i) |\partial^{i-1} \fg|$. Using $n-k-i > 8k$ and $i < k$, \eqref{ineq_3.9} follows.
\end{proof}

From \eqref{ineq_3.9} we deduce
\begin{equation}
\label{ineq_3.10}
|\dd(1)| \le \sum_{0\le i\le k-2} |\partial^i\fg| <
\frac17 |\partial^{k-1} \fg|.
\end{equation}

Combining \eqref{ineq_3.7} and \eqref{ineq_3.10}, the statement of the lemma follows for $k \ge 2$. The case $k=1$ is obvious, and thus the proof is complete.

\section{The diversity is small}
\label{section: concentration and small div}
We start this section with the aforementioned concentration result. We believe that it may be of good use in many other settings in extremal set theory. 
\subsection{Concentration}

The following result states that, informally, for $k$-uniform families of density $\alpha$ the densities of the sizes of their intersections with large subsets of the ground set are tightly concentrated around $\alpha$.
\begin{thm}
\label{complement_conc}
Fix integers $m, \ell, \ell', t$ such that $m \ge t\ell + \ell'$, fix 
$a > 0$ and set $\eps = \frac{2a + \sqrt{8\ln 2}}{\sqrt t}$.
Let $\fg \subset \binom{[m]}{\ell}$ be a family and set $\alpha := |\fg| / \binom{m}{\ell}$. Let $H$ be  distributed uniformly on $\binom{m}{\ell'}$. Then
\begin{equation}\label{eqconc}
\Prb\left[ |\fg({\overline H})| < (\alpha - \eps) \binom{m - \ell'}{\ell} \right]
< 2 e^{-a^2/2}.
\end{equation}
\end{thm}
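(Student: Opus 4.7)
The plan is to compare the random set $H$ to an auxiliary collection of $t$ disjoint random $\ell$-blocks and to apply Azuma--Hoeffding twice. Sample a uniformly random injection $\pi\colon[\ell'+t\ell]\to[m]$ (possible because $m\ge t\ell+\ell'$), and set $H:=\pi(\{1,\ldots,\ell'\})$ together with $B_i:=\pi(\{\ell'+(i-1)\ell+1,\ldots,\ell'+i\ell\})$ for $i\in[t]$. Then $H$ is uniform on $\binom{[m]}{\ell'}$ and the blocks $B_1,\ldots,B_t$ are pairwise disjoint and disjoint from $H$. Write $Y_i:=\mathbf 1[B_i\in\fg]$, $Y:=Y_1+\cdots+Y_t$, and $\hat\alpha(H):=|\fg(\overline H)|/\binom{m-\ell'}{\ell}$; by symmetry each $B_i$ is marginally uniform on $\binom{[m]}{\ell}$ and conditionally uniform on $\binom{[m]\setminus H}{\ell}$ given $H$, so $\E[Y]=t\alpha$ and $\E[Y\mid H]=t\hat\alpha(H)$.

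Next I would concentrate $Y$ in two ways via the Doob martingale that reveals the blocks $B_1,\ldots,B_t$ one at a time---once unconditionally, once conditionally on $H$. The hypothesis $m\ge t\ell+\ell'$ keeps the residual population at least $\ell$ elements large throughout the reveal, so the ``without-replacement'' perturbation to the conditional density after revealing one block is small and the martingale increments can be bounded by $1$ up to a small additive correction. Azuma--Hoeffding then yields, for every $a>0$ and with $\eps=\tfrac{2a+\sqrt{8\ln 2}}{\sqrt t}$, the one-sided bounds
\begin{equation*}
\Prb\bigl[Y\le t\alpha-\tfrac12 t\eps\bigr]\le e^{-a^2/2}\quad\text{and}\quad \Prb\bigl[Y\ge t\hat\alpha(H)+\tfrac12 t\eps\mid H\bigr]\le e^{-a^2/2}.
\end{equation*}
Here the $a/\sqrt t$ portion of $\tfrac12\eps$ is the pure Azuma bound for unit-bounded martingale differences, while the additional $\sqrt{8\ln 2}/\sqrt t$ summand absorbs the correction due to the without-replacement sampling of the $B_i$'s. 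A union bound over the two failure events then gives $\hat\alpha(H)\ge\alpha-\eps$ with probability at least $1-2e^{-a^2/2}$, which is the stated conclusion.

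The main obstacle is the conditional bound. Given $H$, the indicators $Y_i$ are dependent because $(B_1,\ldots,B_t)$ is sampled without replacement from $\binom{[m]\setminus H}{\ell}$, and one must verify quantitatively that the Doob martingale increments are bounded by $1$ up to a small additive term. This is where the gap $m-t\ell-\ell'\ge 0$ is used essentially: it ensures that the residual ground set at step $i$ has size at least $(t-i+1)\ell$, so the conditional density of $\fg$ shifts by only $O(1/(t-i+1))$ per reveal, and the accumulated corrections fit inside the $\sqrt{8\ln 2}/\sqrt t$ slack of the stated $\eps$. Once this estimate is in place, the two Azuma applications and the final union bound are routine.
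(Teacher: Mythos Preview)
Your overall architecture is exactly the paper's: couple a uniform $H\in\binom{[m]}{\ell'}$ with a uniform $t$-matching $(B_1,\dots,B_t)$ of $\ell$-sets, exploit that $\E[Y]=t\alpha$ and $\E[Y\mid H]=t\hat\alpha(H)$, and combine an unconditional and a conditional one-sided concentration bound for $Y=\sum_i\mathbf 1[B_i\in\fg]$. The paper, however, does not attempt to prove that concentration from scratch: it invokes the ready-made inequality $\Prb[\,|Y-\alpha t|\ge 2a\sqrt t\,]\le e^{-a^2/2}$ (Theorem~\ref{mathcings_conc}, cited from \cite{FK_conc}) as a black box, and the rest is the short contradiction argument you see.

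The gap in your proposal is the Azuma step. For the block-reveal Doob martingale $M_i=\E[Y\mid B_1,\dots,B_i]=Y_1+\cdots+Y_i+(t-i)\alpha_i$, the increment is $\Delta_i=Y_i-\alpha_{i-1}+(t-i)(\alpha_i-\alpha_{i-1})$, and your claim that the density shift satisfies $|\alpha_i-\alpha_{i-1}|=O(1/(t-i+1))$ is not true under the sole hypothesis $m\ge t\ell+\ell'$. Removing an $\ell$-block from a ground set of size $M$ can change the $\ell$-set density by up to $\binom{M}{\ell}/\binom{M-\ell}{\ell}-1$, and when $M$ is near its minimum $(t-i+1)\ell$ this ratio is of order $e^{\ell/(t-i)}$, not $1+O(1/(t-i+1))$. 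Likewise, the natural coupling (swap $b\leftrightarrow b'$ for two candidate values of $B_i$) only shows $|\Delta_i|\le \ell+1$, since as many as $\ell$ later blocks can meet $b'\setminus b$; plugging $c_i=\ell+1$ into Azuma gives concentration at scale $\sqrt{t}\,\ell$, far too weak. Getting genuinely unit-size increments for this matching statistic is exactly the nontrivial content of \cite{FK_conc}, and your sketch does not supply it. If you are willing to cite that result, your two displayed bounds follow immediately and your union-bound finish is correct; otherwise you need a separate, substantial argument for the increment bound.
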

We note that the same bound for the probability holds for the upper deviations. 
The proof of this result relies on the following concentration result by Kupavskii.
\begin{thm}[\cite{FK_conc}]
\label{mathcings_conc}
Fix integers $m, l, t$ such that $m \ge tl$. Let $\fg \subset \binom{[m]}{l}$ be a family and set $\alpha := |\fg| / \binom{m}{l}$. Let $\mathcal B$ be chosen uniformly at random out of all $t$-matchings of $\ell$-sets and let $\eta$ be a random variable $|\fg \cap \mathcal M|$. Then $\E[\eta] = \alpha t$ and, for any positive $a$ and $\delta\in\{-1, 1\}$, we have
$$
\Prb[\delta\cdot (\eta - \alpha t) \ge 2a \sqrt{t}] \le e^{-a^2/2}.
$$
\end{thm}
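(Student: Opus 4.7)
The plan is to couple a random $\ell'$-set $H$ with a random $t$-matching $\mathcal M$ of $\ell$-sets and reduce the desired tail bound to two applications of Theorem~\ref{mathcings_conc}. I would sample $H$ uniformly from $\binom{[m]}{\ell'}$ and then, conditional on $H$, sample $\mathcal M$ uniformly from the $t$-matchings of $\ell$-sets inside $[m]\setminus H$. The hypothesis $m \ge t\ell + \ell'$ is precisely what is needed for such matchings to exist. A standard counting argument shows the marginal of $\mathcal M$ in this joint distribution is uniform over all $t$-matchings of $\ell$-sets in $[m]$, because the number of $t$-matchings inside a ground set depends only on its size.

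Set $\alpha_H := |\fg(\overline H)|/\binom{m-\ell'}{\ell}$ and $\eta := |\fg \cap \mathcal M|$. Since every edge of $\mathcal M$ is disjoint from $H$, one has $\eta = |\fg(\overline H) \cap \mathcal M|$. I would then apply Theorem~\ref{mathcings_conc} twice: unconditionally (using the uniform marginal of $\mathcal M$ on $t$-matchings in $[m]$ and the density $\alpha$ of $\fg$) with parameter $a$ to get
$$\Prb\left[\eta \le \alpha t - 2a\sqrt t\right] \le e^{-a^2/2},$$
and conditionally on any fixed $H$ (using that $\mathcal M \mid H$ is a uniform $t$-matching in $[m]\setminus H$ and that the density of $\fg(\overline H)$ there is $\alpha_H$) with parameter $\sqrt{2\ln 2}$ to get
$$\Prb\left[\eta \ge \alpha_H t + 2\sqrt{2\ln 2}\sqrt t \mid H\right] \le e^{-\ln 2} = \tfrac12.$$

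Let $E := \{\alpha_H < \alpha - \eps\}$ be the bad $H$-event and $A := \{\eta > \alpha t - 2a\sqrt t\}$. On $A \cap E$, the choice $\eps = (2a + \sqrt{8\ln 2})/\sqrt t$ yields $\eta - \alpha_H t > \eps t - 2a\sqrt t = 2\sqrt{2\ln 2}\sqrt t$, so $A \cap E$ is contained in the conditional bad event above. Averaging the conditional bound over $H \in E$ gives $\Prb[A \cap E] \le \tfrac12 \Prb[E]$. Combining with $\Prb[A] \ge 1 - e^{-a^2/2}$ and the trivial bound $\Prb[A] \le \Prb[A \cap E] + \Prb[\overline E]$ yields $\Prb[E] \le 2 e^{-a^2/2}$, which is exactly \eqref{eqconc}.

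The main obstacle I anticipate is verifying cleanly that the two-stage sampling produces the correct uniform marginal on $\mathcal M$; once that symmetry is in hand, the rest is a short arithmetic balancing act in which the two deviation parameters $a$ and $\sqrt{2\ln 2}$ are chosen so that one of them swallows the factor $\tfrac12$ in the final combination while their sum realises the claimed value of $\eps$. The same argument applied with $\delta=+1$ inside Theorem~\ref{mathcings_conc} gives the analogous upper-tail bound promised after the statement.
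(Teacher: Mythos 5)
There is a fundamental mismatch here: the statement you were asked to prove is Theorem~\ref{mathcings_conc} itself — the concentration of $\eta=|\fg\cap\mathcal M|$ around $\alpha t$ for a uniformly random $t$-matching $\mathcal M$ — which the paper imports from \cite{FK_conc} without proof. What you have written is instead a proof of Theorem~\ref{complement_conc} (the bound \eqref{eqconc} on the probability that $|\fg(\overline H)|$ falls below $(\alpha-\eps)\binom{m-\ell'}{\ell}$), and your argument obtains it by invoking Theorem~\ref{mathcings_conc} twice. Read as a proof of Theorem~\ref{mathcings_conc}, this is circular: both of your key displayed inequalities are instances of the very tail bound you are supposed to establish.

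The actual content of Theorem~\ref{mathcings_conc} is not addressed anywhere in your write-up. One needs to verify $\E[\eta]=\alpha t$ (each edge of a uniform $t$-matching is marginally uniform on $\binom{[m]}{\ell}$, so linearity of expectation applies), and then prove a sub-Gaussian tail with deviation scale $\sqrt t$. The standard route is to expose the edges $B_1,\dots,B_t$ of the matching one at a time, form the Doob martingale $X_i=\E[\eta\mid B_1,\dots,B_i]$, show the increments are bounded (revealing one more $\ell$-set removes only $\ell$ points from a ground set of size at least $t\ell$, so the conditional density of $\fg$ on the remainder, and hence the conditional expectation, moves by $O(1)$ — this is exactly where the hypothesis $m\ge t\ell$ enters), and finish with Azuma--Hoeffding; this is essentially the argument in \cite{FK_conc}. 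None of these ingredients appear in your proposal. As a side remark, your argument does correctly reproduce the paper's proof of Theorem~\ref{complement_conc} — indeed your averaging over $H\in E$ is marginally cleaner than the paper's selection of a single witness $H'$ — but that is a different theorem from the one at issue.
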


\begin{proof}[Proof of Theorem~\ref{complement_conc}]
Let $\hh\subset\binom{m}{\ell'}$ be the family of sets $H$ such that $|\fg({\overline H})| < (\alpha - \eps) \binom{m - \ell'}{\ell}$. 
Denote $\beta := |\hh| / \binom{m}{\ell'}$ and assume that $\beta \ge 2 e^{-a^2/2}$.
Let $(H, B_1, \ldots, B_t)$, where $|H| = \ell'$ and $|B_i| = \ell$ for $i\in[t]$, be a  $(t+1)$-matching  chosen uniformly at random out of all matchings of sets with such sizes. Note that $H$ is distributed uniformly on $\binom{[m]}{\ell'}$ and, in particular, $\Prb[H\in\hh] = \beta$. Moreover, the subset of the matching above $\bb := (B_1, \ldots, B_t)$ is distributed uniformly on the set of all $t$-matchings of $\ell$-element subsets of $[m]$, and by Theorem~\ref{mathcings_conc} we have
$$
\Prb\big[|\bb \cap \fg| < \alpha t - 2a \sqrt{t}\big] < e^{-a^2/2} \le \beta / 2.
$$
Therefore, we have $\Prb\big[|\bb \cap \fg| < \alpha t - 2a \sqrt{t} \mid H\in\hh\big] < 1/2$, and, in particular, there is a set $H'\in\mathcal H$ such that 
\begin{equation}
\label{smol_mathcing_prob}
\Prb[|\bb \cap \fg| < \alpha t - 2a \sqrt{t} \mid H=H'] < 1/2.
\end{equation}

Fix such a set $H'$ and put $\fg' := \fg(\overline{H'})$, $\alpha' := |\fg'| / \binom{m-\ell'}{\ell} < \alpha - \eps$. Denote a uniformly random $t$-matching of $\ell$-subsets in $[m]\setminus H'$  by $\bb'$. Note that for a fixed $H=H'$ the random matching $\bb$ is distributed the same way as $\bb'$.  Then~\eqref{smol_mathcing_prob} can be written as
\begin{equation}\label{eqfirstevent}
\Prb[|\bb' \cap \fg'| < \alpha t - 2a \sqrt{t} ] < 1/2.
\end{equation}
On the other hand, from Theorem~\ref{mathcings_conc} we have
\begin{equation}\label{eqsecondevent}
\Prb[|\bb' \cap \fg'| > \alpha' t + 2\sqrt{2 t\ln 2}] \le e^{-\ln 2} = 1/2.
\end{equation}
With positive probability, neither of the events in the left hand sides of \eqref{eqfirstevent} and \eqref{eqsecondevent} hold.  This implies
$\alpha t - 2a \sqrt{t} < \alpha' t + 2\sqrt{2 t\ln 2} < (\alpha - \eps) t +  \sqrt{8t\ln 2}$, which is a contradiction with the definition of $\eps$.
\end{proof}

\subsection{Proof of Lemma~\ref{lemma: small div}}
\label{section: proof of small div}

Let $\ff\subset\binom{[n]}{k}$ be an intersecting family that is not a star and that does not satisfy \eqref{main_thm_sum}. Denote $C := n / k$.

Note that 
$$
\sum_{i=1}^{k-2} \binom{n-1}{i-1}\le
2{n-1\choose k-3} = 
\frac {2(k-1)(k-2)}{(n-k+1)(n-k+2)}{n-1\choose k-1} <
\frac{2}{(C-1)^2}{n-1\choose k-1}.
$$
Therefore, if  \eqref{main_thm_sum} does not hold for $\ff$, we get that 
$$
|\ddkf| >
\sum_{i=0}^{k-1} \binom{n-1}{i} - \sum_{i=0}^{k-2} \binom{n}{i} =
\binom{n-1}{k-1} - \sum_{i=1}^{k-2} \binom{n-1}{i-1} \ge
\left(1 - \frac{2}{(C-1)^2}\right)\binom{n-1}{k-1}.
$$
Since $|\ddkf| \le k|\ff|$, we also have $|\ff| \ge \frac{1}{2k}\binom{n-1}{k-1}$ for $C\ge 3$, which proves Proposition~\ref{obs7}. 
Now we can bound density of $\ddkf$:
$$
|\ddkf| / \binom{n}{k-1} \ge
\left(1 - \frac{2}{(C-1)^2}\right) \frac{n-k+1}{n} \ge
\left(1 - \frac{2}{(C-1)^2}\right) \left(1 - \frac{1}{C}\right) \ge
1- \frac{2}{C}.
$$
\medskip
We apply Theorem~\ref{complement_conc} for $\ddkf$ with $m = n$, $\ell = k - 1$, $\ell' = k$, $t=\lfloor C - 1\rfloor$, $a = \sqrt{2\ln 8n}$, $\eps = \frac{2a+\sqrt{8\ln 2}}{\sqrt{t}}$. 
Note that 
\begin{align*}
\eps & < 
\frac{2\sqrt{2\ln (8Ck)} + 2.4}{\sqrt{C - 2}} < 
\frac{2\sqrt{2\ln (400k\ln k)} + 2.4}{\sqrt{50\ln k - 2}} < 
2\sqrt{ \frac{2}{49} \left(1 + \frac{\ln 400}{\ln k} + \frac{\ln\ln k}{\ln k}\right) } + 0.18
\\
& < 
2\sqrt{ \frac{2}{49} (1 + 1.54 + 0.35) } + 0.18 <
0.87 < 
0.9 - \frac{2}{C}.
\end{align*}
We conclude that the probability in the right hand side of \eqref{eqconc} is at most $2e^{-a^2/2} = 2e^{-\ln 8n} = \frac 1{4n}.$ Therefore, the property in \eqref{eqconc} cannot be satisfied for all $H \in \ff$ (indeed, the probability that a randomly chosen $H$ belongs to $\ff$ is at least $\frac 1{2n}$). Therefore, unveiling what \eqref{eqconc} states with these parameters, we get that there is $F\in\ff$ such that $|\ddkf \cap \binom{\overline F}{k-1}| \ge \frac{1}{10} \binom{n-k}{k-1}$. W.l.o.g. we assume that $F = \{1, \ldots, k\}$.

By the definition of $\ddkf$ and since $\ff$ is intersecting, for each $D \in \ddkf \cap \binom{\overline F}{k-1}$ there is a $x\in F$ such that $\{x\}\sqcup D\in\ff$.
Split $\ddkf \cap \binom{\overline F}{k-1}$ into $\bigsqcup_{x\in F} \ee_x$ such that for any $D\in\ee_x$ we have $\{x\}\sqcup D\in\ff$. Note that this partition is not unique and the families $\ee_x$ are pairwise cross-intersecting.

Put $m = n - k$, $\ell = k - 1$ and $r :=  \ln k $.

\begin{claim}
\label{big_or_smol}
For some $i$ we have 
$|\ee_i| \ge \binom{m}{\ell} - \binom{m-r}{\ell}$.
\end{claim}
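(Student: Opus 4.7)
\medskip

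\noindent\textbf{Proof plan.}
My plan is to argue by contradiction: assume $|\mathcal{E}_i|<T:=\binom{m}{\ell}-\binom{m-r}{\ell}$ for every $i\in F$, and derive $|\mathcal{E}|<\tfrac{1}{10}\binom{m}{\ell}$ for the disjoint union $\mathcal{E}:=\bigsqcup_{x\in F}\mathcal{E}_x$, contradicting the lower bound $|\mathcal{E}|\ge\tfrac{1}{10}\binom{m}{\ell}$ established just above. The only tool I plan to use is Theorem~\ref{crossint}, applied to $\mathcal{E}_1$ (a largest $\mathcal{E}_x$) and $\bigsqcup_{j\ne 1}\mathcal{E}_j$ at the explicit threshold $x=m-r$, so that no parametrization of $|\mathcal{E}_1|$ is needed.

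Let $\mathcal{E}_1$ be a largest part. The families $\mathcal{E}_1$ and $\bigsqcup_{j\ne 1}\mathcal{E}_j$ are $\ell$-uniform subfamilies of $\binom{\overline F}{\ell}$ with $|\overline F|=m\ge 2\ell$, and they are cross-intersecting since the $\mathcal{E}_x$ are pairwise cross-intersecting. By pigeonhole, $|\mathcal{E}_1|\ge|\mathcal{E}|/k\ge\binom{m}{\ell}/(10k)$, while a direct computation gives
$$\frac{\binom{m-r}{m-\ell}}{\binom{m}{\ell}}=\prod_{j=\ell+1}^{m}\Big(1-\frac{r}{j}\Big)\le\Big(\frac{\ell}{m}\Big)^{r}\le(49\ln k)^{-\ln k}$$
(using $\ell/m\le 1/(49\ln k)$, which follows from $n\ge 50k\ln k$). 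For $k\ge 50$ the right-hand side is super-polynomially smaller than $1/(10k)$, so the hypothesis $|\mathcal{E}_1|\ge\binom{m-r}{m-\ell}$ of Theorem~\ref{crossint} at $x=m-r$ is satisfied, and the theorem yields
$$\sum_{j\ne 1}|\mathcal{E}_j|\le\binom{m}{\ell}-\binom{m-r}{\ell}=T.$$
Combined with the contradiction hypothesis $|\mathcal{E}_1|<T$, this gives $|\mathcal{E}|<2T$.

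To finish, I will bound $T$ from above via
$$\frac{\binom{m-r}{\ell}}{\binom{m}{\ell}}=\prod_{i=0}^{\ell-1}\Big(1-\frac{r}{m-i}\Big)\ge\Big(1-\frac{r}{m-\ell+1}\Big)^{\ell}\ge\exp\!\Big(-\frac{r\ell}{m-\ell+1-r}\Big)\ge e^{-1/48},$$
where the last inequality uses $r\ell/(m-\ell+1-r)\le 1/48$, a routine consequence of $n\ge 50k\ln k$. Hence $T\le(1-e^{-1/48})\binom{m}{\ell}\le\binom{m}{\ell}/48$, and therefore $|\mathcal{E}|<2T\le\binom{m}{\ell}/24<\tfrac{1}{10}\binom{m}{\ell}$, the desired contradiction. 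The main obstacle is simply the joint bookkeeping of the two binomial estimates: one needs $\binom{m-r}{m-\ell}$ to be super-polynomially small (so the weak averaging bound already activates Theorem~\ref{crossint} at $x=m-r$), and $\binom{m-r}{\ell}$ to lie within a $1-O(1/50)$ fraction of $\binom{m}{\ell}$ (so that $2T<\binom{m}{\ell}/10$); the constant $50$ in the hypothesis $n\ge 50k\ln k$ is calibrated precisely to give both.
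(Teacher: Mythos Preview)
Your argument is correct and runs on the same engine as the paper's---Theorem~\ref{crossint} plus the estimate that $T=\binom{m}{\ell}-\binom{m-r}{\ell}$ is a small fraction of $\binom{m}{\ell}$---but the split is organised differently. The paper first shows that if no $\mathcal{E}_i$ exceeds $|\mathcal{E}|-\binom{m-1}{\ell-1}$ then a greedy partition $F=I_1\sqcup I_2$ makes both unions exceed $\binom{m-1}{\ell-1}$, contradicting Theorem~\ref{crossint} at $x=m-1$; hence some $|\mathcal{E}_i|>|\mathcal{E}|-\binom{m-1}{\ell-1}>\tfrac{1}{12}\binom{m}{\ell}\ge T$. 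You instead take the largest part against the rest and apply Theorem~\ref{crossint} directly at $x=m-r$, which caps the rest by $T$; together with the contradiction hypothesis $|\mathcal{E}_1|<T$ this yields $|\mathcal{E}|<2T<\tfrac{1}{10}\binom{m}{\ell}$. Your route is slightly more streamlined (one invocation of Theorem~\ref{crossint} at the ``right'' threshold), at the cost of the extra easy check $|\mathcal{E}_1|\ge\binom{m}{\ell}/(10k)\ge\binom{m-r}{m-\ell}$ needed to trigger the theorem.

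One minor inaccuracy: the step $\prod_{j=\ell+1}^{m}(1-r/j)\le(\ell/m)^{r}$ is not literally true, since $\sum_{j=\ell+1}^{m}1/j<\ln(m/\ell)$. Using $\tfrac{1}{j}\ge\ln\tfrac{j+1}{j}$ gives the valid bound $\le\big(\tfrac{\ell+1}{m+1}\big)^{r}$, and as $\tfrac{\ell+1}{m+1}=\tfrac{k}{n-k+1}\le\tfrac{1}{49\ln k}$ under the standing hypotheses, your conclusion $(49\ln k)^{-\ln k}\ll 1/(10k)$ is unaffected.
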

\begin{proof}
Assume that there is no $i$ such that $|\ee_i|>|\ddkf \cap \binom{\overline F}{k-1}|-{m-1\choose \ell-1}.$ Then we can partition $F = I_1\sqcup I_2$ such that $|\cup_{i\in I_1} \ee_i|>{m-1\choose \ell-1}$ and $|\cup_{i\in I_2} \ee_i|>{m-1\choose \ell-1}$. But the families $\cup_{i\in I_1} \ee_i$ and $\cup_{i\in I_2} \ee_i$ are cross-intersecting, which contradicts Theorem~\ref{crossint} with $a = b = \ell$ and $x = m-1.$

Therefore, we can assume that $|\ee_i|>|\ddkf \cap \binom{\overline F}{k-1}|-{m-1\choose \ell-1}$. We are only left to verify that this quantity is at least the bound stated in the claim. Note that 
$$
|\ee_i| > 
\Big|\ddkf \cap \binom{\overline F}{k-1}\Big| - {m-1\choose \ell-1} > 
\left(\frac{1}{10} - \frac \ell m\right){m\choose \ell} > 
\frac{1}{12}{m\choose \ell}.
$$

In order to prove the claim, we are thus only left to show that ${m-r\choose \ell} > \frac{11}{12} {m\choose \ell}.$ Indeed, noting that $m-\ell-r+1 = C\ell + C - 2\ell - r > (C-2)\ell$, we have

\begin{small}$$
\binom{m}{\ell} / \binom{m-r}{\ell} \le
\left(\frac {m-\ell+1}{m-r-\ell+1}\right)^\ell \le
\exp \left(\frac{\ell r}{m-r-\ell+1} \right) <
\exp\left(\frac{r}{C-2}\right) < 
\exp\left(\frac{1}{20}\right) <
\frac{12}{11}.
$$\end{small}
\end{proof}

In what follows, we w.l.o.g. assume that the $i$ guaranteed by Claim~\ref{big_or_smol}  is equal to $1$.

\begin{claim}
\label{claim_2}
Let $A$ be such that $A \subset F$, $1\notin A$. Then $|\ff(A, F)| \le \binom{m - r}{m - k + |A|}$.
\end{claim}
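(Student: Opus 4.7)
The plan is to exhibit $\ee_1$ and $\ff(A, F)$ as cross-intersecting families on the ground set $\overline F$, and then convert this into a size bound by applying the Kruskal--Katona theorem to a complement family.

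For the cross-intersection step, fix any $F' \in \ff$ with $F' \cap F = A$. Since $1 \in F \setminus A$, we have $1 \notin F'$. For every $D \in \ee_1$ the set $\{1\} \sqcup D$ lies in $\ff$, so by the intersecting property $(\{1\} \sqcup D) \cap F' \ne \varnothing$. As $1 \notin F'$, this gives $D \cap F' \ne \varnothing$, and because $D \subset \overline F$ is disjoint from $A \subset F$, it in fact forces $D \cap (F' \setminus A) \ne \varnothing$. Hence $\ee_1 \subset \binom{\overline F}{\ell}$ and $\ff(A, F) \subset \binom{\overline F}{k - |A|}$ are cross-intersecting on the $m$-element ground set $\overline F$.

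From here I would pass to complements inside $\overline F$: set $\hh := \{\overline F \setminus G : G \in \ff(A, F)\} \subset \binom{\overline F}{m - k + |A|}$. Cross-intersection translates to $\ee_1 \cap \partial^{\ell} \hh = \varnothing$, so
\[
|\partial^{\ell} \hh| \le \binom{m}{\ell} - |\ee_1| \le \binom{m - r}{\ell}
\]
by Claim~\ref{big_or_smol}. Writing $|\hh| = \binom{y}{m - k + |A|}$, which is admissible because $\ell < m - k + |A|$ under $n \ge 10k$, the Kruskal--Katona theorem yields $|\partial^{\ell} \hh| \ge \binom{y}{\ell}$. Comparing with the previous inequality forces $y \le m - r$, hence $|\ff(A, F)| = |\hh| \le \binom{m - r}{m - k + |A|}$, as desired.

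The only non-routine step is the cross-intersection observation itself, i.e.\ recognising that stripping the element $1$ off $\{1\} \sqcup D$ converts a witness of the intersecting property for $\ff$ into a witness of cross-intersection between $\ee_1$ and $\ff(A, F)$ on the ground set $\overline F$. Once this is in place, the complement-plus-Kruskal--Katona manipulation is routine and no further difficulty is expected; the parameter constraints (such as $\ell < m - k + |A|$ and $m - r \ge \ell$) are easily verified from $n \ge 10k$.
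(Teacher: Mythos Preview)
The proposal is correct and takes essentially the same approach as the paper. The paper simply invokes Theorem~\ref{crossint} (the Kruskal--Katona-based bound for cross-intersecting families) as a black box, whereas you unroll that theorem inline via complements and the Lov\'asz form of Kruskal--Katona; the cross-intersection observation and the resulting bound are identical.
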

\begin{proof}
Note that $\ff(A,F)\subset {[n]\setminus F\choose k-|A|}$. Since $\ee_1$ and $\ff(A, F)$ are cross-intersecting and $|\ee_1| > \binom{m}{\ell} - \binom{m-r}{\ell}$, from Theorem~\ref{crossint} we get $|\ff(A, F)| \le \binom{m - r}{m - k + |A|}$.
\end{proof}

\begin{claim} We have
\label{smol_diversity}
$$
|\ff(\bar 1)| \le \binom{n-r-1}{n-k-1}.
$$
\end{claim}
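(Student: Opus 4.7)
The plan is straightforward: partition $\ff(\bar 1)$ according to how its members meet the distinguished $k$-set $F=\{1,\ldots,k\}\in\ff$, apply Claim~\ref{claim_2} to each piece, and recognize the resulting sum as a subsum of a Chu--Vandermonde convolution equal to $\binom{n-r-1}{n-k-1}$.

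In more detail, since $\ff$ is intersecting and $F\in\ff$, every $G\in\ff(\bar 1)$ has $G\cap F\ne\varnothing$, and the condition $1\notin G$ forces this intersection to lie in $F\setminus\{1\}=\{2,\ldots,k\}$. Writing $A:=G\cap F$ and partitioning accordingly,
$$|\ff(\bar 1)| \;=\; \sum_{\varnothing\ne A\subset\{2,\ldots,k\}} |\ff(A,F)| \;\le\; \sum_{j=1}^{k-1}\binom{k-1}{j}\binom{n-k-r}{n-2k+j},$$
where the inequality uses Claim~\ref{claim_2} applied to each $A$ of cardinality $j$, together with the fact that there are $\binom{k-1}{j}$ such $A$.

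To conclude, I would invoke the Chu--Vandermonde identity $\binom{x+y}{N}=\sum_{i=0}^{N}\binom{x}{i}\binom{y}{N-i}$, which is a polynomial identity in $x,y$ and hence holds for arbitrary reals provided $N$ is a non-negative integer. Specializing to $x=k-1$, $y=n-k-r$, and $N=n-k-1$, the terms with $i>k-1$ vanish because $\binom{k-1}{i}=0$ there, and using $n\ge 2k$ one checks that the reindexing $j=k-1-i$ makes $j$ run over $\{0,\ldots,k-1\}$ and $n-k-1-i=n-2k+j$. This yields
$$\binom{n-r-1}{n-k-1} \;=\; \sum_{j=0}^{k-1}\binom{k-1}{j}\binom{n-k-r}{n-2k+j}.$$
The missing $j=0$ summand $\binom{n-k-r}{n-2k}$ is a product of positive real factors (the smallest being $k-r+1>0$ since $r=\ln k<k$) divided by $(n-2k)!$, hence non-negative; dropping it gives exactly the bound stated in the claim. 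I do not expect any real obstacle here: the substantive work was already carried out in Claim~\ref{claim_2}, and what remains is a bookkeeping application of Chu--Vandermonde for generalized binomial coefficients.
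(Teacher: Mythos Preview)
Your proof is correct and follows essentially the same approach as the paper: decompose $\ff(\bar 1)$ by intersection with $F$, apply Claim~\ref{claim_2} to each piece, and sum via Vandermonde to obtain $\binom{n-r-1}{n-k-1}$. The only (cosmetic) difference is that the paper includes the empty intersection $A=\varnothing$ in the decomposition (its contribution is zero anyway) and applies Vandermonde directly, whereas you exclude it and then bound by the full Vandermonde sum after observing the $j=0$ term is non-negative.
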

\begin{proof}
Decomposing and applying Claim~\ref{claim_2} we get
$$
|\ff(\bar 1)| = \sum_{A\subset F, 1\notin A} |\ff(A, F)| \le 
\sum_{a=0}^{k-1} \binom{k-1}{k-1-a}\binom{m - r}{m-k+a} =
\binom{n-r-1}{n-k-1}
$$
(in the last inequality we use that $m = n - k$).
\end{proof}
Lemma~\ref{lemma: small div} follows from Claim~\ref{smol_diversity} immediately.

\section{Counterexamples}
\label{section: countrexample}

For $2 \le p \le k$, $n > 2k$, define the family $\aaa_p(n, k)$ by
$$
\aaa_p(n, k) := 
\left\{A\in\binom{[n]}{k}\colon 1\in A, A\cap[2, p+1] \ne\varnothing\right\} \cup 
\left\{A\in\binom{[n]}{k}\colon [2, p+1]\subset A\right\}.
$$
Note that $\aaa_k(n, k)$ is the only family that attains equality in the Hilton--Milner Theorem~\eqref{HM_bound} for~$k > 3$.
In \cite{Frankl1987EKR} it was shown that families $\aaa_p(n, k)$ are extremal among intersecting families with fixed maximum degree. In \cite{KZ} the stronger result showing that they are extremal among intersecting families with fixed minimal diversity is proven. Also let us note that $|\aaa_2(n, k)| = |\aaa_3(n, k)|$.

In the following, we show that both $\aaa_3(n,k)$ and $\aaa_k(n,k)$ are counterexamples to Conjecture~\ref{F_diff_conj} for $n = ck$ and small $c$. We have $|\dd(\aaa_3(n,k))|>|\dd(\mathcal S_x)|$ for $2<c<\frac 12(3+\sqrt 5)$ and $|\dd(\aaa_k(n,k))|>|\dd(\mathcal S_x)|$ for $2<c<4$. At the same time, it is not difficult to see that for $2<c<\frac 12(3+\sqrt 5)$ we have $|\dd(\aaa_3(n,k))|>|\dd(\aaa_k(n,k))|$. 

We believe that each $\aaa_t(n,k)$ for $4\le t\le k$ provides a counterexample to the conjecture for some values of $c>2$, but it requires some tedious computations, so we have not checked it.

\subsection{$\aaa_k(n,k)$}

Let us put $\bb:= \aaa_k(n, k)$ for shorthand. 
Comparing it with the full star $\mathcal S_1 = \{A\in\binom{[n]}{k}\colon 1\in A\}$ we have 
$\bb = \left( \mathcal S_1
\setminus \left\{F\cup\{1\}\colon F\in\binom{[k+2,n]}{k - 1}\right\}\right) \cup \{[2, k+1]\}$.
This easily implies
\begin{equation}
\label{eq: countrex_1}
\dd(\mathcal S_1)\setminus\dd(\bb) = \binom{[k+2,n]}{k-1},
\end{equation}
i.e., we ``lose'' $\binom{n-k-1}{k-1}$ sets.

On the other hand for all $D\subset [k+2,n]$, $0\le |D| \le k-2$, the sets $\{1\}\cup D$ is in $\dd(\bb)\setminus\dd(\mathcal S_1)$. Indeed, fix $E\subset [2, k+1]$, $|E| = k-1-|D|$, Then $\{1\} \cup D\cup E \in \bb$ and $(\{1\}\cup D\cup E)\setminus[2, k+1] = \{1\}\cup D$. Thus
\begin{equation}
\label{eq: countrex_2}
|\dd(\bb)\setminus \dd(\mathcal S_1)| \ge 
\sum_{\ell=0}^{k-2} \binom{n-k-1}{\ell}.
\end{equation}

As long as the RHS of~\eqref{eq: countrex_1} is smaller than the RHS of~\eqref{eq: countrex_2}, $|\dd(\bb)| > |\dd(\mathcal S_1)|$, i.e., we get a counterexample.

\begin{claim}
If $2 < c < 4$ and $n = ck$, $k > k_0(c)$, then
\begin{equation}
\label{eq: counterex claim}
\sum_{\ell=0}^{k-2} \binom{n-k-1}{\ell} > \binom{n-k-1}{k-1}.
\end{equation}
\end{claim}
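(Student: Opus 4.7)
The plan is to reduce the target inequality to a statement about ratios of consecutive binomial coefficients and then exploit the geometric-series structure that appears in the limit $k\to\infty$. Write $N := n-k-1 = (c-1)k-1$ and $a_\ell := \binom{N}{\ell}$; the goal is $\sum_{\ell=0}^{k-2} a_\ell > a_{k-1}$. From the standard recursion $a_{\ell-1}/a_\ell = \ell/(N-\ell+1)$ together with $N-k+2 = (c-2)k+1$, we obtain the exact identity
$$
\frac{a_{k-2-j}}{a_{k-1}} \;=\; \prod_{i=0}^{j}\frac{k-1-i}{(c-2)k+1+i},
$$
valid for $0\le j\le k-2$. For each \emph{fixed} $j$, every factor in this product tends to $1/(c-2)$ as $k\to\infty$, so the ratio tends to $(c-2)^{-(j+1)}$.

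Next, I would observe that the inequality $c<4$ in the hypothesis is exactly what is needed to ensure that the associated geometric sum exceeds $1$. Indeed, for $3<c<4$,
$$
\sum_{j\ge 0} (c-2)^{-(j+1)} \;=\; \frac{1/(c-2)}{1-1/(c-2)} \;=\; \frac{1}{c-3} \;>\; 1,
$$
while for $2<c\le 3$ the series trivially diverges (or, at $c=3$, the partial sum $1+1=2$ already exceeds $1$). Therefore I can choose a finite $J=J(c)$ such that
$$
\sum_{j=0}^{J} (c-2)^{-(j+1)} \;>\; 1+\eta
$$
for some explicit $\eta=\eta(c)>0$.

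With $J$ fixed, pick $k_0(c)$ so large that, uniformly in $0\le i\le J$ and $k\ge k_0$, each factor $(k-1-i)/((c-2)k+1+i)$ lies within a small multiplicative window of its limit $1/(c-2)$; this is a routine calculation since only finitely many factors are involved. For such $k$, the partial sum $\sum_{j=0}^{J} a_{k-2-j}/a_{k-1}$ still exceeds $1$, and as $\sum_{\ell=0}^{k-2} a_\ell \ge \sum_{j=0}^{J} a_{k-2-j}$, the claim follows.

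The only real ``obstacle'' is conceptual rather than computational: one must recognize that the relevant quantity in the limit is the geometric-series sum $1/(c-3)$, and that the strict inequality $c<4$ in the hypothesis is exactly the condition under which this sum exceeds $1$. Once this is identified, the remaining estimates are elementary and all uniformity questions concern only finitely many factors, so no sharp tail bounds are required.
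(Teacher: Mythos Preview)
Your proposal is correct and follows essentially the same route as the paper. Both arguments observe that the ratio $\binom{n-k-1}{k-1-j}\big/\binom{n-k-1}{k-1}$ behaves like $(c-2)^{-(j+1)}$, truncate at a fixed number of terms so that the associated geometric partial sum exceeds $1$ (which is possible precisely when $c<4$), and then use that for large $k$ the actual ratios are within a small multiplicative window of this limit; the paper just packages the ``window'' step by fixing a concrete $d=c-2+\tfrac{4-c}{2}$ and proving $\binom{n-k-1}{k-1-j}>d^{-j}\binom{n-k-1}{k-1}$ directly, whereas you phrase it as a limit, but the content is the same.
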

\begin{proof}
Take $d = c / 2$, $1 < d < 2$. Define $t$ as the minimum integer satisfying
\begin{equation}
\label{eq: counterex_3}
\frac{1}{d} + \frac{1}{d^2} + \ldots + \frac{1}{d^t} \ge 1.
\end{equation}

We want to choose $k_0 = k_0(c)$ to satisfy
\begin{equation}
\label{eq: counterex_4}
\binom{n-k-1}{k-1-j} > d^{-j} \binom{n-k-1}{k-1}\quad\mbox{ for }1 \le j \le t.
\end{equation}
Combined with \eqref{eq: counterex_3} this would clearly imply~\eqref{eq: counterex claim}. For~\eqref{eq: counterex_4} it is sufficient that
$$
\binom{n-k-1}{k-1-j} / \binom{n-k-1}{k-j} = \frac{k-j}{n-2k+j} \ge \frac{1}{d} = \frac{2}{c}
\quad\mbox{ for }1 \le j \le t.
$$
Equivalently, $(4 + c)k \ge 2n + (2 + c)j$ or $k \ge \frac{2 + c}{4 - c}\cdot t$. Thus, for sufficiently large $k$ the inequalities \eqref{eq: counterex_4} hold and therefore the claim is proved. 
\end{proof}

\subsection{$\aaa_3(n,k)$}

Put $\cc := \aaa_3(n, k)$.
For a set $F\in\binom{[n]}{k}$ its membership in $\cc$ is decided by the intersection $F\cap [4]$, it is a so-called ``junta''. Define
$$
\mathcal J^* := \{\{1,2\}, \{1,3\}, \{1,4\}, \{1,2,3\}, \{1,2,4\}, \{1,3,4\}, \{2,3,4\}, \{1,2,3,4\}\},
$$
then $F\in\cc$ iff $F\cap [4] \in \mathcal J^*$. (We say that $\mathcal J^*$ is the {\it defining family} of $\mathcal J$.) Note that $\mathcal J^*$ is intersecting.
We need to show that
$$
|\dd(\cc)| > \sum_{i=1}^{k} \binom{n-1}{k-i}.
$$

First put $\mathcal L_i := \{F\setminus F'\colon F,F'\in\mathcal J^*, |F\cap F'| \le i\}$. Note that by the definition $\varnothing = \mathcal L_0\subset\mathcal L_1 \subset \mathcal L_2 \subset \mathcal L_3 \subset \mathcal L_4 = \mathcal L_5 = \ldots = \mathcal L_k$.

Now we want to rewrite $|\dd^{(k-i)}(\cc)|$ using $\mathcal L_i$. Note that for each $F\setminus F'\in \dd^{(k-i)}(\cc)$ we have $|F\cap F'| = i$, thus for $J := F\cap [4]$ and $J' := F'\cap [4]$ we have $|J\cap J'| \le i$ and $J\setminus J' \in \mathcal L_i$. This implies that $\dd^{(k-i)}(\cc)\subset \{F\in {n\choose k-i}: F\cap [4]\in \mathcal L_i\}$. Actually, it is not difficult to see that equality holds in the previous inclusion. Grouping the size of the family in the right hand side by its intersections with $[4]$, we get
$$
|\dd^{(k-i)}(\cc)| = \sum_{b=0}^4  |\mathcal L_i^{(b)}| \binom{n-4}{k-i-b}.
$$
Summing this over $i = 1, \ldots, k$ we get
\begin{equation}
\label{eq: diff size decomposition}
|\dd(\cc)| =
\sum_{i=1}^k |\dd^{(k-i)}(\cc)| =
\sum_{i=1}^k \sum_{b=0}^4 |\mathcal L_i^{(b)}| \binom{n-4}{k-i-b}.
\end{equation}

For our $\mathcal J^*$ we have $\mathcal L_1 = \{\{1\}, \{2\}, \{3\}, \{4\}, \{2,3\}, \{2,4\}, \{3,4\}\}$ and $\mathcal L_b = \mathcal L_1 \cup \{\varnothing\}$ for $b > 1$. Then we can regroup terms and rewrite
\begin{align*}
|\dd(\cc)| 
& =
\sum_{i=1}^k \left( |\mathcal L_{i+1}^{(0)}| \binom{n-4}{k-i-1} + \sum_{b=1}^2 |\mathcal L_i^{(b)}| \binom{n-4}{k-i-b} \right)
\\
& =
\sum_{i=1}^k \left( \binom{n-4}{k-i-1} +4\binom{n-4}{k-i-1} +3\cdot \binom{n-4}{k-i-2} \right)\\
& = 
5\sum_{i=0}^{k-2}{n-4\choose i}+3\sum_{i=0}^{k-3}{n-4\choose i}
\end{align*}

Note that we can use Pascal's triangle identity several times and rewrite 
$$
|\dd(\mathcal S_1)| = 
\sum_{i=0}^{k-1} \binom{n-1}{i} =
\sum_{i=0}^{k-1} {n-4\choose i}+3\sum_{i=0}^{k-2}{n-4\choose i}+3\sum_{i=0}^{k-3}{n-4\choose i}+\sum_{i=0}^{k-4}{n-4\choose i}.
$$

Therefore, we have
$$|\dd(\mathcal S_1)|-|\dd(\cc)| = {n-4\choose k-1}-{n-4\choose k-2}-{n-4\choose k-3} = {n-4\choose k-1}-{n-3\choose k-2}.$$
The latter expression is negative iff $\frac 1{k-1}<\frac{n-3}{(n-k-1)(n-k-2)},$ which is equivalent to $(n-k-1)(n-k-2)<(n-3)(k-1).$ Simplifying, we get $n^2-(3k+2)n+k^2+6k-1<0$, which holds for $n<\frac 12(3k+2+\sqrt{5k^2-12k+8})\sim \frac 12(3+\sqrt 5)k$ for $k\to \infty.$


\medskip

\textbf{Remark.} Put  $\alpha := 1/c$ and $\mu_\alpha(F) := \alpha^{|F|}(1-\alpha)^{|\mathcal J^*|-|F|}$, $\mu_\alpha(\cc):=\sum_{F\in\cc} \mu_\alpha(F),$ and $\mathcal L_{=i} := \mathcal L_{i} - \mathcal L_{i-1}$. Then we can rewrite \eqref{eq: diff size decomposition} as follows.
$$
|\dd(\cc)| \sim
\sum_{i=1}^k \binom{n}{k-i} \mu_\alpha(\mathcal L_i) =
\sum_{i=1}^k \sum_{s=1}^{|\mathcal J^*|} \binom{n}{k-i+1-s} \mu_\alpha(\mathcal L_{=s}) \sim
$$
$$
\sim
\sum_{i=1}^k \binom{n}{k-i} \sum_{s=1}^{|\mathcal J^*|} \left(\frac{\alpha}{1-\alpha}\right)^{s-1} \mu_\alpha(\mathcal L_{=s}).
$$
Thus the problem of maximizing $|\dd(\cc)|$ for an intersecting junta can be reduced to a problem of maximizing $\sum_{s=1}^{|\mathcal J^*|} \left(\frac{\alpha}{1-\alpha}\right)^{s-1} \mu_\alpha(\mathcal L_{=s})$ for the defining family of the junta.

\section{Conclusion}

It is natural to ask similar questions for intersecting families, where 
set difference operation is replaced with another binary set operation. E.g. define $\mathcal{SD}(\ff) := \{F\Delta F'\colon F, F'\in\ff\}$, where $\Delta$ stands for symmetric difference. It is clear that for a full star $\mathcal S_x$ we have
$$
|\mathcal{SD}(\mathcal S_x)| = \sum_{0\le\ell<k} \binom{n-1}{2\ell}.
$$
\begin{conj}\label{conjsd}
For any intersecting family $\ff$, $n > 10k$, we have
\end{conj}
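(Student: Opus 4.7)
The plan is to adapt the two-phase strategy that proves Theorem~\ref{main_thm}: a small-diversity combinatorial step in the spirit of Lemma~\ref{lemma: from small div}, together with a concentration step in the spirit of Lemma~\ref{lemma: small div} showing that any counterexample forces $\gamma(\ff)$ to be small. Both phases rest on the key preliminary observation that the conjectured bound $\sum_{\ell=0}^{k-1}\binom{n-1}{2\ell}$ is dominated by its top term $\binom{n-1}{2k-2}$: the same ratio estimate used at the start of Section~\ref{section: proof of small div} shows that the remaining terms sum to at most $O(k^2/n^2)\binom{n-1}{2k-2}$. Hence a counterexample forces $|\mathcal{SD}^{(2k-2)}(\ff)|$ to be close to $\binom{n-1}{2k-2}$, with density at least $1-O(1/C)$ in $\binom{[n]}{2k-2}$ where $C=n/k$. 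The quantitative threshold this method produces is likely $n\ge c\,k\ln k$, as in Theorem~\ref{main_thm}, rather than the conjectured $n>10k$.

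For the concentration step, I would apply Theorem~\ref{complement_conc} to $\mathcal{SD}^{(2k-2)}(\ff)$ with $\ell=2k-2$, $\ell'=k$, $t\asymp C$ to find some $F\in\ff$ with $|\mathcal{SD}^{(2k-2)}(\ff)\cap\binom{\overline F}{2k-2}|\ge\tfrac{1}{10}\binom{n-k}{2k-2}$. Each such $D$ has the form $F'\Delta F''$ with $|F'\cap F''|=1$; intersectingness then forces the common element $x:=F'\cap F''$ to lie in $F$ (since $D\cap F=\varnothing$ gives $F\cap F'\subseteq F'\cap F''=\{x\}$), and $D=A\sqcup B$ with $A\cup\{x\},B\cup\{x\}\in\ff$. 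Partitioning this large set by the choice of $x$ yields $\bigsqcup_{x\in F}\mathcal E_x$; the families $\mathcal E_x$ are pairwise cross-intersecting by exactly the argument used in the proof of Claim~\ref{big_or_smol}. Theorem~\ref{crossint}, applied iteratively, then picks out some $\mathcal E_x$, wlog $x=1$, with $|\mathcal E_1|\ge\binom{n-k}{2k-2}-\binom{n-k-\ln k}{2k-2}$. Since every $H\in\ff(\bar 1)$ must meet every $D\in\mathcal E_1$ (otherwise $A\cup\{1\}$ or $B\cup\{1\}$ would miss $H$, using $1\notin H$), $\ff(\bar 1)$ is cross-intersecting with $\mathcal E_1$, and Theorem~\ref{crossint} gives $\gamma(\ff)\le\binom{n-\ln k-1}{n-k-1}$, paralleling Claims~\ref{claim_2}--\ref{smol_diversity}.

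For the small-diversity step, set $\mathcal G:=\{F\setminus\{1\}:F\in\ff(1)\}\subseteq\binom{[2,n]}{k-1}$, $\mathcal H:=\ff(\bar 1)\subseteq\binom{[2,n]}{k}$ and $\fg:=\{[2,n]\setminus H:H\in\mathcal H\}$, so $\mathcal G$ and $\mathcal H$ are cross-intersecting. Split $\mathcal{SD}(\ff)=\mathcal{SD}(1)\sqcup\mathcal{SD}(\bar 1)$ by whether $1$ lies in the symmetric difference, and decompose $\mathcal{SD}(\bar 1)=\mathcal{SD}(\mathcal G)\cup\mathcal{SD}(\mathcal H)$. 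At the top level, any $D\in\partial^{2k-2}\fg$ is excluded from $\mathcal{SD}^{(2k-2)}(\mathcal G)$: such a $D=G\sqcup G'$ with $G\cap G'=\varnothing$ (forced by $|D|=2k-2$) and $G,G'\subseteq[2,n]\setminus H$ would contradict cross-intersectivity with $H\in\mathcal H$. A Kruskal-Katona estimate analogous to \eqref{small_div_ineq1} gives $|\partial^{2k-2}\fg|\ge(1-O(1/k))\binom{n-1}{2k-2}$, producing a substantial deficit at the top level that must absorb $|\mathcal{SD}^{(2k-2)}(\mathcal H)|$ together with $|\mathcal{SD}(1)|$.

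The main obstacle is making this bookkeeping close. The naive bound $|\mathcal{SD}^{(2k-2)}(\mathcal H)|\le\binom{\gamma(\ff)}{2}$ is too weak, since $\gamma(\ff)^2$ can exceed $\binom{n-1}{2k-2}$; one should instead exploit that pairs $H,H'\in\mathcal H$ with $|H\cap H'|=1$ have their common element $y$ determined by $H\Delta H'$ together with the structure of $\mathcal H$, so the count is controlled by lower-level shadows of $\fg$ (mirroring \eqref{ineq_3.9}). For $|\mathcal{SD}(1)|$, each $\{1\}\cup E$ arises from $E=G\Delta H$ with $G\in\mathcal G$, $H\in\mathcal H$, $G\cap H\ne\varnothing$, yielding an odd-sized $E$ (since $|G|\ne|H|$); decomposing $E=(H\setminus G)\sqcup(G\setminus H)$ where $H\setminus G$ is a nonempty proper subset of $H$ and $G\setminus H\subseteq[2,n]\setminus H$ lets one bound $|\mathcal{SD}(1)|$ by a telescoping sum of odd-level shadows of $\mathcal H$, parallel to the chain \eqref{ineq_3.9}--\eqref{ineq_3.10}. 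Verifying that both of these fit beneath $|\partial^{2k-2}\fg|$ under $n\ge ck\ln k$ is the delicate technical heart of the argument.
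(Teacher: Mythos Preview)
The statement is a \emph{conjecture} that the paper explicitly leaves open; the authors write that ``the general case is elusive'' and that proving it even for $n>100k\ln k$ ``would already be very interesting.'' So there is no paper proof to compare against, and your proposal must be judged on its own.

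The decisive gap is in the concentration step, and it is precisely the obstacle the paper names: ``It is even unclear how to show that $|\ff|$ should be large if $\ff$ is extremal.'' To pass from Theorem~\ref{complement_conc} to the existence of some $F\in\ff$ with many $(2k-2)$-differences in $\binom{\overline F}{2k-2}$, you need $|\ff|/\binom{n}{k}$ to exceed the failure probability $2e^{-a^2/2}$, i.e.\ $|\ff|\ge \binom{n}{k}/\mathrm{poly}(n)$. In the difference case this came for free from the linear inequality $|\ddkf|\le k|\ff|$. For symmetric differences the only general bound is $|\mathcal{SD}^{(2k-2)}(\ff)|\le|\ff|^2$, so the near-maximality of $|\mathcal{SD}^{(2k-2)}(\ff)|$ yields merely $|\ff|\gtrsim\sqrt{\binom{n}{2k-2}}$. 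Since $\binom{n}{k}^2/\binom{n}{2k-2}\ge\binom{2k-2}{k-1}\sim 4^{k}/\sqrt{k}$, this lower bound is exponentially small compared to $\binom{n}{k}$, far below what the concentration argument requires. Your write-up silently assumes this step goes through (``to find some $F\in\ff$''), but it does not.

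This is not a minor slack issue: the paper points out that a $p$-random subfamily of a full star, for a wide range of $p$, already realises almost all symmetric differences of the star. So near-extremal $|\mathcal{SD}(\ff)|$ genuinely does not force $|\ff|$ to be large, and any approach along these lines needs a new idea here. The remaining pieces of your sketch (the cross-intersecting structure of the $\mathcal E_x$, the bound on $\ff(\bar 1)$ via Theorem~\ref{crossint}) are plausible once you have a good $F$, and indeed the paper confirms that the concentration half works \emph{conditionally} on $|\ff|>n^{-100}\binom{n}{k}$; but without that hypothesis the argument does not start.
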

\begin{equation}
\label{eq: conj sym diff}
|\mathcal{SD}(\ff)| \le \sum_{0\le\ell<k} \binom{n-1}{2\ell}.
\end{equation}

Using an argument analogous to \cite{F_diff}, one can show that~\eqref{eq: conj sym diff} holds for, say, $n>3k^2$. Repeating the argument from Section~\ref{section: proof of small div} one can show that for intersecting $\ff\subset{[n]\choose k}$ with $|\ff| > n^{-100} \binom{n}{k}$ that maximizes $|\mathcal{SD}(\ff)|$ we have $\gamma(\ff) < n^{-100}\binom{n}{k}$ for $n > Ck\ln k$.

However, the general case is elusive. It is even unclear how to show that $|\ff|$ should be large if $\ff$ is extremal. It is not difficult to see that a $p$-random subset of  a full star $\mathcal S_x$ in a wide range of parameters has almost exactly the same number of symmetric differences as $\mathcal S_x$.  We believe that proving Conjecture~\ref{conjsd} even for $n>100 k\ln k$ would already be very interesting.


\medskip

Another question one may ask is as follows: what is the maximum of $\mu_p(\dd(\ff))$ for intersecting $\ff\subset 2^{[n]}$ and $0 < p < \frac 12$? In \cite{F_diff} the first author proved that $\mu_{1/2}(\dd(\ff)) \le \frac12$. 
To solve this problem we can essentially repeat the argument from \cite{F_diff}. It was proved in \cite{EKR} that for any intersecting family $\ff\subset 2^{[n]}$ there is an intersecting family $\fg$, such that $\ff\subset\fg\subset 2^{[n]}$, $|\fg| = 2^{n-1}$.
Using the notation $\overline X := [n]\setminus X$, it is clear that for any $X\subset [n]$, either $X$ or $\overline X$ belongs to $\fg$ and $\fg$ is upwards closed. Next, if $X = F\setminus F'$, where $F,F'\in \fg$, then $F'\subset \overline X$, and so $\overline X\in \fg.$ 
We immediately get that $X\notin \fg.$ 
Conversely, if $X\in \fg$ then $[n]\in\fg$ implies $\overline X\in \mathcal D(\fg).$ 
Therefore, we have $\fg \sqcup \dd(\fg) = 2^{[n]}$ and thus $\mu_p(\fg) + \mu_p(\dd(\fg)) = 1$. 
This shows that $\mu_p(\dd(\fg))$ is maximized whenever $\mu_p(\fg)$ is minimized. Since $|\{X,\overline X\}\cap \fg| = 1$, in order to minimize the measure of $\fg$ it is better to include the larger set in $\fg.$ The optimal family also happens to be the intersecting family, corresponding to the majority function (if $n$ is even, then out of all $n/2$-element sets we can take those that contain $1$, say). An interesting consequence is that the maximum of $\mu_p(\mathcal D(\ff))=1-e^{-cn},$ where $c>0$ for any $p<\frac 12.$ This is in contrast with the uniform case, in which the answer more or less corresponds to the $p$-measure $1-p$.

\begin{small}

\end{small}

\end{document}